\numberwithin{equation}{section}
\newtheoremstyle{slplain}
  {\topsep}
  {\topsep}
  {\slshape}
  {0pt}
  {\bfseries}
  {.}
  {0.5em}
  {}
\theoremstyle{slplain}
  \newtheorem{THM}{Theorem}[section]
  \newtheorem{LEM}[THM]{Lemma}
  \newtheorem{PROP}[THM]{Proposition}
  \newtheorem{COR}[THM]{Corollary}
\theoremstyle{definition}
\newcommand\toCC[1]{\overset{#1}\longrightarrow}
\renewcommand{\le}{\leqslant}
\renewcommand{\ge}{\geqslant}
\newcommand{\0}{\varnothing}
\renewcommand{\sec}{\cap}
\renewcommand{\phi}{\varphi}
\renewcommand{\epsilon}{\varepsilon}
\newcommand{\BB}{\mathbf{B}}
\newcommand{\CC}{\mathbf{C}}
\newcommand{\DD}{\mathbf{D}}
\newcommand{\GG}{\mathbf{G}}
\newcommand{\KK}{\mathbf{K}}
\newcommand{\NN}{\mathbb{N}}
\newcommand{\PP}{\mathbf{P}}
\renewcommand{\SS}{\mathbf{S}}
\newcommand{\union}{\cup}
\newcommand{\restr}[2]{\hbox{$#1$}\hbox{$\upharpoonright$}_{#2}}
\newcommand{\reduct}[2]{\hbox{$#1$}\hbox{$|$}_{#2}}
\newcommand{\Boxed}[1]{\mbox{$#1$}}
\newcommand{\id}{\mathrm{id}}
\newcommand{\Ob}{\mathrm{Ob}}
\newcommand{\op}{\mathrm{op}}
\newcommand{\calA}{\mathcal{A}}
\newcommand{\calB}{\mathcal{B}}
\newcommand{\calC}{\mathcal{C}}
\newcommand{\calF}{\mathcal{F}}
\newcommand{\calX}{\mathcal{X}}
\newcommand{\Set}{\mathbf{Set}}
\newcommand{\iso}{\mathrm{iso}}
\newcommand{\Aut}{\mathrm{Aut}}
\newcommand{\subobj}[3]{\binom{#3}{#2}_{#1}}
\newcommand\toDD{\overset{\DD}\longrightarrow}
\newcommand\toCCIII{\overset{\CC_1 \times \CC_2}\longrightarrow}
\newcommand{\ChEmb}{\mathbf{FinChn}}
\newcommand{\ChRSurj}{\mathbf{FinChn}_{\mathit{rs}}}
\newcommand{\slice}[2]{(#1 \downarrow #2)}
\title{Ramsey properties of products and pullbacks of categories and the Grothendieck construction\\
      (with corrections)}
\author{%
  Dragan Ma\v sulovi\'c\\
  University of Novi Sad, Faculty of Sciences\\
  Department of Mathematics and Informatics\\
  Trg Dositeja Obradovi\'ca 3, 21000 Novi Sad, Serbia\\
  e-mail: masul@dmi.uns.ac.rs}
\begin{document}
\maketitle

\begin{abstract}
  In this paper we provide purely categorical proofs of two important results of structural Ramsey theory:
  the result of M.\ Soki\'c that the free product of Ramsey classes is a Ramsey class,
  and the result of \textcolor{red}{M.\ Bodirsky, M.\ Pinsker and T.\ Tsankov} that adding constants to the language of a Ramsey class preserves the Ramsey
  property. The proofs that we present here ignore the model-theoretic
  background of these statements. Instead, they focus on categorical constructions by which the classes can be
  constructed generalizing the original statements along the way.
  It turns out that the restriction to classes of relational structures,
  although fundamental for the original proof strategies, is not relevant for the statements themselves.
  The categorical proofs we present here remove all restrictions
  on the signature of first-order structures and provide the information not
  only about the Ramsey property but also about the Ramsey degrees.
    
  \bigskip

  \noindent \textbf{Key Words:} Ramsey property; Ramsey degrees; product of categories; pullback of categories;
  Grothendieck construction.

  \bigskip

  \noindent \textbf{AMS Subj.\ Classification (2010):} 18A99; 05C55
\end{abstract}

\section{Introduction}

Ramsey theory is one of only a few mathematical theories whose reach goes from elementary problems
that can be demonstrated to undergraduates, to a variety of surprisingly subtle applications in
mathematical logic, set theory, finite combinatorics and topological dynamics.
The successful generalization of Ramsey's theorem from sets (unstructured objects) to cardinals (special well-ordered chains)
lead in the early 1970's to generalizing this setup to arbitrary first-order structures, giving birth to
\emph{structural Ramsey theory}. Interestingly, one of the first comprehensive texts published already in 1973
was Leeb's book \cite{leeb-vorlesungen} where structural Ramsey theory was presented using the language of category theory.
Unfortunately, this point of view was relatively quickly pushed out of the focus of the research community
and was replaced by the model-theoretic approach that is even today a dominant point of view.

As the structural Ramsey theory evolved, it has become evident that the Ramsey phenomena
depend not only on the choice of objects but also on the choice of morphisms
prompting, thus, the shift of the attention back to categorical interpretations of the Ramsey phenomena.
However, instead of pursuing the original approach by Leeb (which has very fruitfully been applied to a wide range of
Ramsey-type problems \cite{GLR, leeb-cat, Nesetril-Rodl}),
we proposed in~\cite{masulovic-ramsey} a systematic study of
a simpler approach motivated by and implicit in \cite{mu-pon,vanthe-more,Zucker-1}.
This paper provides another argument that in some cases categorical reinterpretation of the Ramsey-related
phenomena can be beneficial.

In this paper we provide purely categorical proofs of two important results of structural Ramsey theory:
the result of M.\ Soki\'c that the free product of Ramsey classes is a Ramsey class~\cite{sokic2},
and the result of \textcolor{red}{M.\ Bodirsky, M.\ Pinsker and T.\ Tsankov} that adding constants to the language of a Ramsey class preserves the Ramsey
property~\textcolor{red}{\cite{bodirsky-pinsker-tsankov}} \st{\hbox{\cite{boridsky-ramsey.classes}}}.
The proofs that we present here ignore the model-theoretic
background of these statements. Instead, they focus on categorical constructions by which the classes can be
constructed generalizing the original statements along the way.

In Section~\ref{rpppg.sec.prelim} we recall some basic notions and fix some notation.

In Section~\ref{rpppg.sec.subca-ftr} we upgrade certain results from~\cite{masul-drp} and \cite{masul-kpt}
to obtain more versatile tools for transporting the Ramsey property via functors and from 
a category onto its subcategory.

In Section~\ref{rpppg.sec.prod-pb} we consider the behavior of the Ramsey property, and in particular
Ramsey degrees under products and pullbacks of categories. This enables us to prove a
significant generalization of the following result.

The Finite Product Ramsey Theorem~(see \cite[Theorem 5, p.\ 113]{GRS})
was generalized to products of Ramsey classes of finite structures by M.~Soki\'c~\cite{sokic2},
and later M.~Bodirsky provided a model-theoretic interpretation of the same result in~\cite[Proposition~3.3]{boridsky-ramsey.classes}.
Let $\KK_0, \ldots, \KK_{s-1}$ be classes
of finite structures and let $(\calA_i)_{i < s}, (\calB_i)_{i < s} \in \prod_{i < s} \KK_i$.
Then $\binom{(\calB_i)_{i < s}}{(\calA_i)_{i < s}}$ denotes the set of all
$(\calA'_i)_{i < s} \in \prod_{i < s} \KK_i$ such that $\calA'_i \cong \calA_i$ and
$\calA'_i$ is a substructure of $\calB_i$ for all $i < s$.

\begin{THM}[Free product of Ramsey classes]\cite[Theorem 2]{sokic2}
  Let $s, r \in \NN$, and let $\KK_0, \ldots, \KK_{s-1}$ be classes of finite structures
  with the structural Ramsey property. Fix two sequences $(\calA_i)_{i < s}, (\calB_i)_{i < s}
  \in \prod_{i < s} \KK_i$. Then there is a sequence $(\calC_i)_{i < s} \in \prod_{i < s} \KK_i$
  such that for any coloring $p : \binom{(\calC_i)_{i < s}}{(\calA_i)_{i < s}} \to r$
  there exist a number $l < r$ and a sequence $(\calB'_i)_{i < s} \in \prod_{i < s} \KK_i$ such that
  $\calB'_i \cong \calB_i$ for all $i < s$ and
  $p(\calX) = l$ for all $\calX \in \binom{(\calB'_i)_{i < s}}{(\calA_i)_{i < s}}$.
\end{THM}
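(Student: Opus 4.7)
The plan is to reinterpret the statement in the language of categories and then apply a categorical product construction. Each $\KK_i$ becomes a category whose objects are the finite structures in the class and whose morphisms are the embeddings; the structural Ramsey property of $\KK_i$ is exactly the categorical Ramsey property of this category. With this dictionary in hand, the tuples $(\calA_i)_{i<s}$ and $(\calB_i)_{i<s}$ are objects of the product category $\prod_{i<s}\KK_i$; subobjects in the product are precisely tuples of substructures, so the set $\binom{(\calC_i)_{i<s}}{(\calA_i)_{i<s}}$ is, up to isomorphism, the natural hom-up-to-iso class in the product category. The theorem then reduces to the abstract statement that a finite product of Ramsey categories is a Ramsey category, and the witness $(\calC_i)_{i<s}$ will simply be the object produced in the product.

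A routine induction on $s$ reduces this to the binary case, so the heart of the matter is to show that if $\CC_1$ and $\CC_2$ have the Ramsey property then so does $\CC_1 \times \CC_2$. Here I would run the familiar product-Ramsey argument inside the categorical framework. Given objects $(A_1,A_2)$ and $(B_1,B_2)$ and $r$ colors, first use the Ramsey property of $\CC_2$ to pick $D_2$ with $D_2 \longrightarrow (B_2)^{A_2}_r$, then set $N := r^{|\binom{D_2}{A_2}|}$ and use the Ramsey property of $\CC_1$ to pick $C_1$ with $C_1 \longrightarrow (B_1)^{A_1}_N$. Given any $r$-coloring $\chi$ of $\binom{C_1}{A_1} \times \binom{D_2}{A_2}$, currying gives an $N$-coloring of $\binom{C_1}{A_1}$; a monochromatic $B'_1 \subseteq C_1$ yields a single derived coloring $\psi : \binom{D_2}{A_2} \to r$, to which the choice of $D_2$ supplies a monochromatic $B'_2 \subseteq D_2$, and $(B'_1, B'_2)$ is the required monochromatic pair.

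The main obstacle is that this sketch is usually phrased for concrete subsets of a concrete overstructure, whereas here we must work with iso-classes of subobjects in abstract categories, and we want to track Ramsey degrees, not just the plain Ramsey property. This is exactly what the upgraded transport machinery of Section~\ref{rpppg.sec.subca-ftr} and the product/pullback framework of Section~\ref{rpppg.sec.prod-pb} are designed to handle: the projection functors $\CC_1 \times \CC_2 \to \CC_i$ are object-surjective and interact correctly with subobjects, so the arrow relation can be transported coordinatewise without loss. Once that machinery is invoked, the same argument simultaneously yields a bound of the form $t\bigl((\calA_i)_{i<s},\, \prod_{i<s}\KK_i\bigr) \le \prod_{i<s} t(\calA_i,\KK_i)$ on the small Ramsey degree, which is the quantitative strengthening that the abstract advertises and which the purely combinatorial proof of~\cite{sokic2} does not explicitly produce.
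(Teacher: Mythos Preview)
Your approach is essentially the same as the paper's. The currying argument you sketch in the second paragraph is exactly the standard proof of the upper bound $t_{\CC_1\times\CC_2}(A_1,A_2)\le t_{\CC_1}(A_1)\cdot t_{\CC_2}(A_2)$, which the paper cites from \cite[Theorem~3.3]{masul-kpt} rather than reproving; the induction to $s$ factors is routine, as you note. Two small remarks. First, the paper's Theorem~\ref{rament.thm.srd-mult} goes further and proves \emph{equality} of degrees; the lower bound is obtained not via projection functors but by a direct ``bad coloring'' argument (Steps~1 and~2 in the proof). Second, your third paragraph slightly misidentifies the role of the Section~\ref{rpppg.sec.subca-ftr} machinery: the projection functors $\CC_1\times\CC_2\to\CC_i$ are full, so Lemma~\ref{rament.lem.cofin-full-faith-weak}(c) would give $t_{\CC_i}(A_i)\le t_{\CC_1\times\CC_2}(A_1,A_2)$, i.e.\ the \emph{lower} bound direction, not the upper one you need. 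Your currying argument is self-contained and does not actually require that transport machinery; the identification $\binom{(C_1,C_2)}{(A_1,A_2)}=\binom{C_1}{A_1}\times\binom{C_2}{A_2}$ follows directly from $\hom_{\CC_1\times\CC_2}=\hom_{\CC_1}\times\hom_{\CC_2}$ and $\Aut_{\CC_1\times\CC_2}(A_1,A_2)=\Aut_{\CC_1}(A_1)\times\Aut_{\CC_2}(A_2)$.
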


In the discussion that follows we also relate to a result of M.\ Bodirsky
about strong amalgamation classes of finite structures with the Ramsey property~\cite{Bodirsky}.

Finally, in Section~\ref{rpppg.sec.grothendieck} we consider the behavior of the Ramsey property
under the Grothendieck construction and in slice categories and prove a generalization of the
following result. Let $\Theta$ be a relational signature.
Following~\cite{bodirsky-pinsker-tsankov} we shall say that a countable $\Theta$-structure $\calF$ is
\emph{Ramsey} if the class of all finitely generated substructures of $\calF$ has the Ramsey property.

\begin{THM}[Adding constants to the language]\cite{bodirsky-pinsker-tsankov}
  Adding constants to a relational language preserves the Ramsey property. More precisely, if $\calF$ is
  ordered, homogeneous and Ramsey, and if $x_1, \ldots, x_n$ are elements of $\calF$ then $\calF^* = (\calF, x_1, \ldots, x_n)$ is
  also ordered, homogeneous and Ramsey,
  where $\calF^*$ is a structure over a signature obtained from $\Theta$ by adding $n$ new constant symbols.
\end{THM}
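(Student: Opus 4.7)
The plan is to recast the problem as a slice-category problem and then invoke the Ramsey-transport results for slice categories that this section establishes. Let $\calA \le \calF$ denote the substructure of $\calF$ generated by $\{x_1, \ldots, x_n\}$; since $\calF$ is homogeneous, $\calA$ is finitely generated, hence $\calA \in \Age(\calF)$. The finitely generated substructures of $\calF^*$ are precisely the finitely generated $\Theta$-substructures of $\calF$ that contain $\{x_1, \ldots, x_n\}$, viewed as structures over the expanded signature with the new constant symbols interpreted as the $x_i$. An embedding of two such expansions is exactly an embedding of the underlying $\Theta$-structures that fixes each $x_i$. This would yield a natural isomorphism of categories between $\Age(\calF^*)$ and the slice $\slice{\calA}{\Age(\calF)}$, where morphisms on the right are embeddings commuting with the two inclusions from $\calA$.

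Given this identification, the Ramsey property for $\calF^*$ reduces to the Ramsey property of $\slice{\calA}{\Age(\calF)}$. I would then apply the general transport theorem developed in Section~\ref{rpppg.sec.grothendieck}, which says that the Ramsey property (together with controlled Ramsey degrees) passes from a category $\CC$ to every slice category $\slice{A}{\CC}$ over an object $A$ of $\CC$. Since $\Age(\calF)$ is Ramsey by hypothesis, this immediately gives the Ramsey property for $\Age(\calF^*)$ and, as a bonus, quantitative control of the Ramsey degrees in $\calF^*$ in terms of those in $\calF$.

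It remains to verify that $\calF^*$ is still ordered and homogeneous. Orderedness is immediate: the linear order on $\calF$ is part of $\Theta$ and is inherited by $\calF^*$. For homogeneity, any isomorphism $\phi : \calB^* \to \calC^*$ between finitely generated substructures of $\calF^*$ is an isomorphism of the underlying $\Theta$-structures that sends each $x_i$ to itself; by homogeneity of $\calF$ it extends to some $\sigma \in \Aut(\calF)$, and the constraint $\sigma(x_i) = x_i$ places $\sigma$ in $\Aut(\calF^*)$.

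The main obstacle I anticipate is the categorical identification in the first paragraph, and in particular making it work for signatures that need not be purely relational. The original theorem is stated only for relational $\Theta$, but the slice-category argument does not seem to use relationality anywhere; what is needed is simply that the substructure $\calA$ generated by the constants exists and is finitely generated, and that embeddings of the expansion correspond bijectively to embeddings of the $\Theta$-reduct which fix $\calA$ pointwise. Provided this is checked carefully (distinguishing "finitely generated substructure" from "finite substructure" for non-relational signatures), the slice-category transport theorem from this section delivers the result with no further work, and in the generality advertised in the introduction.
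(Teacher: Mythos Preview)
Your proposal is correct and follows essentially the same route as the paper: the opening discussion of Section~\ref{rpppg.sec.grothendieck} makes exactly your identification of $\Age(\calF^*)$ with the slice category $\slice{\calX}{\Age(\calF)}$ (where $\calX$ is the substructure generated by $x_1,\ldots,x_n$), and the Ramsey conclusion is then drawn from Corollary~\ref{crt.cor.slice-cat}. The paper's concluding corollary treats the slightly more general class $\KK'$ of \emph{all} pointed structures $(\calA,a_1,\ldots,a_n)$, which is why it has to split $\KK'$ into a disjoint union of slice categories by the isomorphism type of $\calA[a_1,\ldots,a_n]$; in your setting the type is fixed, so a single slice suffices, and your verification of ``ordered'' and ``homogeneous'' (which the paper does not spell out) is a welcome addition. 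One small point to make explicit when you write it up: Corollary~\ref{crt.cor.slice-cat} requires $\CC$ to have amalgamation, which you get for free from the homogeneity of $\calF$ via Fra\"\i ss\'e.
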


It turns out that the restriction to classes of relational structures,
although fundamental for both model-theoretic and combinatorial proof strategies,
are not relevant for the statements themselves.
The categorical proofs we present here remove all restrictions
on the signature of first-order structures and provide the information not
only about the Ramsey property but also about the Ramsey degrees.

\section{Preliminaries}
\label{rpppg.sec.prelim}

\paragraph{Categories.}
Let us quickly fix some notation. Let $\CC$ be a category. By $\Ob(\CC)$ we denote the class of all the objects in $\CC$.
Homsets in $\CC$ will be denoted by $\hom_\CC(A, B)$, or simply $\hom(A, B)$ when $\CC$ is clear from the context.
The identity morphism will be denoted by $\id_A$ and the composition of morphisms by $\Boxed\cdot$ (dot).
If $\hom_\CC(A, B) \ne \0$ we write $A \toCC\CC B$ or simply $A \to B$.
Let $\iso_\CC(A, B)$ denote the set of all the invertible morphisms $A \to B$, and let
$\Aut_\CC(A) = \iso(A, A)$ denote the set of all the invertible morphisms $A \to A$.
An object $A \in \Ob(\CC)$ is \emph{rigid} if $\Aut_\CC(A) = \{\id_A\}$.

A category $\CC$ is \emph{directed} if for all $A, B \in \Ob(\CC)$ there exists a $C \in \Ob(\CC)$
such that $A \to C$ and $B \to C$; and it has \emph{amalgamation} if for all
$A, B_1, B_2 \in \Ob(\CC)$, $f_1 \in \hom(A, B_1)$ and $f_2 \in \hom(A, B_2)$
there is a $C \in \Ob(\CC)$ together with $g_1 \in \hom(B_1, C)$ and $g_2 \in \hom(B_2, C)$
such that $g_1 \cdot f_1 = g_2 \cdot f_2$.
As usual, $\CC^\op$ is the opposite category.

If $F : \CC \to \DD$ is a functor then $F(\CC)$ is a subcategory of $\DD$ whose
objects are of the form $F(C)$ where $C \in \Ob(\CC)$ and morphisms are
of the form $F(f)$ where $f$ is a morphism in~$\CC$.

\paragraph{Structures.}
A \emph{signature} is a set $\Theta = \Theta_F \union \Theta_R \union \Theta_C$ where $\Theta_F$
is a set of \emph{function symbols}, $\Theta_R$ is a set of \emph{relation symbols}
and $\Theta_C$ is a set of \emph{constant symbols} $(\Theta_C)$. A \emph{first-order structure of signature $\Theta$} or
a \emph{$\Theta$-structure} $\calA = (A, \Theta^\calA)$ is a set $A$ together with a set $\Theta^\calA$ of
functions on $A$, relations on $A$ and constants from $A$ which are interpretations of the corresponding symbols in $\Theta$.
The underlying set of a structure $\calA$, $\calA_1$, $\calA^*$, \ldots\ will always be denoted by its roman
letter $A$, $A_1$, $A^*$, \ldots\ respectively.
A structure $\calA = (A, \Theta^\calA)$ is \emph{finite} if $A$ is a finite set.
A signature $\Theta = \Theta_F \union \Theta_R \union \Theta_C$ is \emph{relational} if $\Theta_F = \Theta_C = \0$.
A \emph{relational structure} is a $\Theta$-structure where $\Theta$ is a relational signature.

If $\calA$ is a $\Theta$-structure and $\Sigma \subseteq \Theta$ then by $\reduct \calA \Sigma$ we denote
the \emph{$\Sigma$-reduct} of~$\calA$: $\reduct \calA \Sigma = (A, \{\theta^\calA : \theta \in \Sigma\})$.

Let $\Boxed< \notin \Theta$ be a binary relational symbol.
A \emph{finite linearly ordered $\Theta$-structure} is a
$(\Theta\union\{\Boxed<\})$-structure of the form $\calA = (A, \Theta^\calA, \Boxed{<^\calA})$ where
$<^\calA$ is a linear order on~$A$.

An \emph{embedding} $f: \calA \rightarrow \calB$ is an injection $f: A \rightarrow B$ which preserves
functions and constants, and respects and reflects relations. Surjective embeddings are \emph{isomorphisms}.
We write $\calA \cong \calB$ to denote that $\calA$ and $\calB$
are isomorphic, and $\calA \hookrightarrow \calB$ to denote that there is an embedding of $\calA$ into $\calB$.
A structure $\calA$ is a \emph{substructure} of a structure
$\calB$ ($\calA \le \calB$) if the identity map is an embedding of $\calA$ into $\calB$.
Let $\calA$ be a structure and $\0 \ne B \subseteq A$. Then $\calA[B] = (B, \restr {\Theta^\calA} B)$ denotes
the \emph{substructure of $\calA$ induced by~$B$}, where $\restr {\Theta^\calA} B$ denotes the restriction of
$\Theta^\calA$ to~$B$.
Note that $\calA[B]$ is not required to exist for every $B \subseteq A$. For example, if $\Theta$ contains function
symbols, only those $B$ which are closed with respect to all the functions in $\Theta^\calA$ qualify for the base set of a substructure.
However, if $\calA$ is a relational structure then $\calA[B]$ exists for every $B \subseteq A$.

Every class of finite structures $\KK$ can be thought of as a category where morphisms are embeddings and the
composition of morphisms is just the usual function composition.

\paragraph{Ramsey phenomena in a category.}
Let $\CC$ be a locally small category and let $A, B \in \Ob(\CC)$.
Write $f \sim_A g$ to denote that there is an $\alpha \in \Aut(A)$ such that $f = g \cdot \alpha$.
It is easy to see that $\sim_A$ is an equivalence relation on $\hom(A, B)$, so we let
$
  \binom BA = \hom(A, B) / \Boxed{\sim_A}
$
be the set of all the \emph{subobjects of $B$ isomorphic to $A$}.

For a $k \in \NN$, a \emph{$k$-coloring} of a set $S$ is any mapping $\chi : S \to K$, where
$K$ is a finite set with $|K| = k$. Often it will be convenient to take
$k = \{0, 1,\ldots, k-1\}$ as the set of colors. Clearly, a $k$-coloring of $S$
can also be understood as a union $S = X_1 \cup \ldots \cup X_k$ where
$i \ne j \Rightarrow X_i \cap X_j = \0$.

For an integer $k \in \NN$ and $A, B, C \in \Ob(\CC)$ we write
$
  C \overset\sim\longrightarrow (B)^{A}_{k, t}
$
to denote that for every $k$-coloring
$
  \chi : \subobj{} AC \to k
$
there is a morphism $w : B \to C$ such that $|\chi(w \cdot \subobj{} AB)| \le t$.
(Note that $w \cdot (f / \Boxed{\sim_A}) = (w \cdot f) / \Boxed{\sim_A}$ for $f / \Boxed{\sim_A} \in \subobj{}AB$;
therefore, we shall simply write $w \cdot f / \Boxed{\sim_A}$.)
Instead of $C \overset\sim\longrightarrow (B)^{A}_{k, 1}$ we simply write
$C \overset\sim\longrightarrow (B)^{A}_{k}$.

For $A \in \Ob(\CC)$ let $\tilde t_\CC(A)$ denote the least positive integer $n$ such that
for all $k \in \NN$ and all $B \in \Ob(\CC)$ there exists a $C \in \Ob(\CC)$ such that
$C \overset\sim\longrightarrow (B)^{A}_{k, n}$, if such an integer exists.
Otherwise put $\tilde t_\CC(A) = \infty$. Then $\tilde t_\CC(A)$ is referred to as the
\emph{structural Ramsey degree of $A$ in $\CC$}. 
A category $\CC$ \emph{has finite structural Ramsey degrees} if $\tilde t_\CC(A) < \infty$ for all $A \in \Ob(\CC)$.
An $A \in \Ob(\CC)$ is a \emph{Ramsey object in $\CC$} if $\tilde t_\CC(A) = 1$.
A locally small category $\CC$ has the
\emph{structural Ramsey property} if $\tilde t_\CC(A) = 1$ for all $A \in \Ob(\CC)$.
A locally small category $\CC$ has the
\emph{dual structural Ramsey property} if $\tilde t_{\CC^\op}(A) = 1$ for all $A \in \Ob(\CC)$.

The notion of embedding Ramsey degrees can be introduced analogously and proves to be much more
convenient when it comes to actual calculations. We write
$
  C \longrightarrow (B)^{A}_{k,t}
$
to denote that for every $k$-coloring
$
  \chi : \hom(A, C) \to k
$
there is a morphism $w : B \to C$ such that $|\chi(w \cdot \hom(A, B))| \le t$.
Instead of $C \longrightarrow (B)^{A}_{k, 1}$ we simply write
$C \longrightarrow (B)^{A}_{k}$.

For $A \in \Ob(\CC)$ let $t_\CC(A)$ denote the least positive integer $n$ such that
for all $k \in \NN$ and all $B \in \Ob(\CC)$ there exists a $C \in \Ob(\CC)$ such that
$C \longrightarrow (B)^{A}_{k, n}$, if such an integer exists.
Otherwise put $t_\CC(A) = \infty$. Then $t_\CC(A)$ is referred to as the
\emph{embedding Ramsey degree of $A$ in $\CC$}. 
A locally small category $\CC$ has the
\emph{embedding Ramsey property} if $t_\CC(A) = 1$ for all $A \in \Ob(\CC)$.
A locally small category $\CC$ has the
\emph{dual embedding Ramsey property} if $t_{\CC^\op}(A) = 1$ for all $A \in \Ob(\CC)$.

If all the objects in a category $\CC$ are rigid then the structural and embedding (dual) Ramsey
properties coincide and we speak simply of the Ramsey property.
Let us state two fundamental results of Ramsey theory.

\begin{THM}[Finite Ramsey Theorem]\cite{Ramsey}
  The category $\ChEmb$ whose objects are finite chains ($=$ linearly ordered sets) and morphisms
  are embeddings has the Ramsey property.
\end{THM}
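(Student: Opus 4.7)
The plan is to reduce the categorical statement to the classical Finite Ramsey Theorem of Ramsey, and then invoke the standard inductive proof of the latter. The key observation is that every finite chain is rigid: any order-preserving bijection $A \to A$ of a finite linearly ordered set is the identity. Hence $\Aut_{\ChEmb}(A) = \{\id_A\}$ for every $A$, so by the remark in the Preliminaries the structural and embedding Ramsey properties coincide and it suffices to show $t_{\ChEmb}(A) = 1$ for every finite chain $A$.

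Next, I would unwind $\hom_{\ChEmb}(A, B)$ concretely. If $|A| = n$, then an embedding $A \hookrightarrow B$ is an order-preserving injection, and its image $X \subseteq B$ is an $n$-element subset; conversely any $n$-element $X \subseteq B$ is the image of exactly one embedding because $B$ carries a total order that uniquely identifies $X$ with $A$. Thus $\hom_{\ChEmb}(A, B)$ is naturally in bijection with $\binom{B}{n}$, the set of $n$-element subsets of $B$. Under this identification, for any chain $C$ a $k$-coloring $\chi : \hom(A, C) \to k$ is simply a $k$-coloring of $\binom{C}{n}$, and a morphism $w : B \to C$ together with the constraint $|\chi(w \cdot \hom(A, B))| = 1$ translates verbatim into the existence of an $|B|$-element subset $w(B) \subseteq C$ all of whose $n$-subsets receive the same $\chi$-color.

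Therefore, to prove $C \longrightarrow (B)^A_k$ it suffices to choose $C$ of cardinality at least the classical Ramsey number $R_n(|B|; k)$, where $R_n(m; k)$ is the smallest $N$ such that every $k$-coloring of $\binom{[N]}{n}$ admits a monochromatic $m$-subset. The existence of $R_n(m; k)$ is the Finite Ramsey Theorem in its classical form, which one proves by induction on the arity $n$: the base case $n = 1$ is the Pigeonhole Principle (take $N = k(m-1) + 1$), and the inductive step constructs a sequence $x_1, x_2, \ldots$ inside a sufficiently large ground set, greedily shrinking the ``candidate set'' at each stage so that the color of every $(n+1)$-subset with smallest element $x_i$ depends only on $x_i$, then applies the inductive hypothesis to the color assigned to the $x_i$'s.

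The main obstacle is the combinatorial induction for the Finite Ramsey Theorem itself, but this is entirely standard (see \cite[Ch.~1]{GRS}); the genuinely category-theoretic content of the statement reduces to the rigidity of finite chains and the identification $\hom(A, B) = \binom{B}{|A|}$. No amalgamation or auxiliary machinery from the rest of the paper is needed here.
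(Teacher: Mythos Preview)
Your proposal is correct. The paper itself does not prove this statement at all: it is stated as a classical result with a citation to Ramsey's original paper~\cite{Ramsey} and is used only as background. Your reduction---observing that finite chains are rigid so that structural and embedding Ramsey properties coincide, and that $\hom_{\ChEmb}(A,B)$ is in natural bijection with $\binom{B}{|A|}$---is exactly the standard way to see that the categorical formulation is equivalent to the classical Finite Ramsey Theorem, and your sketch of the inductive proof of the latter is the textbook argument. There is nothing to compare here, since the paper offers no proof of its own; your write-up simply fills in what the paper takes for granted.
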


For the formulation of the Finite Dual Ramsey theorem we have to introduce special surjective maps
between finite chains. Let $(A, \Boxed{<_A})$ and $(B, \Boxed{<_B})$ be tow finite chains and $f : A \to B$
a surjective mapping. We say that $f$ is a \emph{rigid surjection} if $\min f^{-1}(b) <_A \min f^{-1}(b')$
whenever $b <_B b'$.

\begin{THM}[Finite Dual Ramsey Theorem]\cite{GR}
  The category $\ChRSurj$ whose objects are finite chains ($=$ linearly ordered sets) and morphisms
  are rigid surjections has the dual Ramsey property.
\end{THM}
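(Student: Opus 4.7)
The plan is to deduce this combinatorial statement from the Hales--Jewett theorem, using the well-known bijection between rigid surjections and ordered set partitions. Concretely, a rigid surjection $f : [n] \to [k]$ corresponds to the ordered partition with blocks $f^{-1}(1), f^{-1}(2), \ldots, f^{-1}(k)$; rigidity is precisely the condition that these blocks are listed in increasing order of their minima. Composition of rigid surjections translates to coarsening of ordered partitions. Rewriting the dual Ramsey property in this language: for all $r \in \NN$ and all $1 \le a \le b$, we seek a natural number $c$ such that every $r$-coloring of the ordered $a$-partitions of $[c]$ admits an ordered $b$-partition $\Pi$ of $[c]$ whose coarsenings into $a$ blocks are all assigned the same color.

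I would proceed by induction on $b - a$. The base case $b = a$ is immediate, since the only coarsening of $\Pi$ into $a = b$ blocks is $\Pi$ itself, so any single $\Pi$ works. For the inductive step, the natural tool is the Hales--Jewett theorem applied to an alphabet encoding block labels; more elegantly, one can appeal directly to the Graham--Rothschild parameter word theorem, which is precisely tailored to this setting. A $k$-parameter word of length $c$ over the empty alphabet is in canonical bijection with an ordered $k$-partition of $[c]$ (the parameter $x_i$ marking the $i$-th block), and the conclusion of the parameter word theorem is exactly the monochromatic-coarsening statement we want. The parameter word theorem itself is proved by induction on the parameter dimension, with Hales--Jewett supplying the base case.

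The main obstacle is the bookkeeping needed so that rigidity (blocks ordered by minima) is preserved along the induction: a naive application of Hales--Jewett produces monochromatic combinatorial lines over an alphabet of block labels, but does not automatically respect this ordering. The cleanest workaround is to work throughout in the language of parameter words, where rigidity is built into the definition (the parameters $x_1, x_2, \ldots$ must appear with their first occurrences in increasing index), and to translate back to rigid surjections only at the very end of the argument. Since the theorem is quoted here only for reference, I would ultimately just cite the Graham--Rothschild result rather than reproduce the combinatorial argument in full.
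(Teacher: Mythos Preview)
Your proposal is correct, and your final conclusion --- to simply cite the Graham--Rothschild result --- is exactly what the paper does: this theorem appears in the preliminaries only as a quoted reference to \cite{GR}, with no proof given. Your sketch via parameter words is the standard route and is accurate, but it goes beyond what the paper requires.
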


The embedding Ramsey property in a category can force other structural properties.
In particular,

\begin{THM}\cite{Nesetril}\label{crt.thm.RP=>AP}
  Let $\CC$ be a locally small directed category with the embedding Ramsey property.
  Then $\CC$ has amalgamation.
\end{THM}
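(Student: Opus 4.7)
The plan is to derive amalgamation by contradiction, using directedness to collapse $B_1$ and $B_2$ into a common target and then exploiting the embedding Ramsey property via a 2-coloring that encodes ``factoring through $f_2$''.

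First, given $A, B_1, B_2 \in \Ob(\CC)$ with $f_1 \in \hom(A, B_1)$ and $f_2 \in \hom(A, B_2)$, I would apply directedness to obtain $D \in \Ob(\CC)$ together with $w_1 \in \hom(B_1, D)$ and $w_2 \in \hom(B_2, D)$, and set $g_i = w_i \cdot f_i \in \hom(A, D)$ for $i = 1, 2$. Assume for contradiction that $f_1, f_2$ admit no amalgamation in $\CC$; in particular $g_1 \ne g_2$, since $g_1 = g_2$ would already exhibit $(D, w_1, w_2)$ as an amalgamation. Next, invoke $t_\CC(A) = 1$ to produce $C \in \Ob(\CC)$ with $C \longrightarrow (D)^A_2$, and define a coloring $\chi : \hom(A, C) \to \{0, 1\}$ by setting $\chi(g) = 0$ if $g = h \cdot f_2$ for some $h \in \hom(B_2, C)$, and $\chi(g) = 1$ otherwise. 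The Ramsey arrow yields a morphism $w \in \hom(D, C)$ such that $\chi$ is constant on $w \cdot \hom(A, D)$.

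The contradiction then comes from computing the colors of $w \cdot g_1$ and $w \cdot g_2$, both of which lie in $w \cdot \hom(A, D)$. The morphism $w \cdot g_2 = (w \cdot w_2) \cdot f_2$ factors through $f_2$ by construction, so $\chi(w \cdot g_2) = 0$. On the other hand, if we had $w \cdot g_1 = h \cdot f_2$ for some $h \in \hom(B_2, C)$, then combined with the equality $w \cdot g_1 = (w \cdot w_1) \cdot f_1$ this would exhibit $(C, w \cdot w_1, h)$ as an amalgamation of $f_1, f_2$, contrary to our assumption; hence $\chi(w \cdot g_1) = 1$. Both colors are therefore attained on $w \cdot \hom(A, D)$, contradicting monochromaticity.

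The main (and essentially only) subtle point is the choice of coloring: one has to ensure that the non-amalgamation hypothesis genuinely produces an asymmetry between $w \cdot g_1$ and $w \cdot g_2$. Using \emph{factoring through $f_2$} is the natural asymmetric choice, since it forces $w \cdot g_2$ into color $0$ automatically while, under the contradiction hypothesis, it expels $w \cdot g_1$ into color $1$; the remaining steps are routine composition bookkeeping.
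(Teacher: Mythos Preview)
Your argument is correct and is precisely the standard Ne\v set\v ril argument. Note, however, that the paper does not supply its own proof of this theorem: it is stated as a preliminary result with a citation to~\cite{Nesetril}, so there is nothing in the paper to compare against beyond confirming that your proof matches the classical one.
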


The following relationship between structural and embedding Ramsey degrees
was proved for relational structures in \cite{Zucker-1} and generalized to this form in \cite{masul-kpt}.

\begin{PROP}\label{rdbas.prop.sml} (\cite{Zucker-1,masul-kpt})
  Let $\CC$ be a locally small category whose morphisms are mono
  and let $A \in \Ob(\CC)$. Then $t(A)$ is finite if and only if both $\tilde t(A)$ and $\Aut(A)$ are finite,
  and in that case
  $
    t(A) = |\Aut(A)| \cdot \tilde t(A)
  $.
\end{PROP}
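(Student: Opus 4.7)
The plan is to leverage the fact that, since all morphisms are mono, the right action of $\Aut(A)$ on any homset $\hom(A, B)$ by $f \mapsto f \cdot \alpha$ is free. Consequently every $\sim_A$-equivalence class has cardinality exactly $|\Aut(A)|$, giving the identity $|\hom(A, B)| = |\Aut(A)| \cdot \bigl|\binom{B}{A}\bigr|$; and for any $w : B \to C$, post-composition by $w$ is an injection $\hom(A, B) \hookrightarrow \hom(A, C)$ that maps $\sim_A$-orbits bijectively onto $\sim_A$-orbits of $\hom(A, C)$ of the same size. This orbit bookkeeping is what the whole argument rests on.

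First I would dispatch $\tilde t(A) \le t(A)$ by pulling back each $k$-coloring of $\binom{C}{A}$ to a $\sim_A$-invariant $k$-coloring of $\hom(A, C)$ and applying the definition of $t(A)$ directly. Next I would show that $t(A) = n < \infty$ forces $|\Aut(A)| < \infty$: take $B = A$, $k = n+1$ and $C$ a Ramsey witness; if some $\sim_A$-orbit in $\hom(A, C)$ were infinite one could partition it into $n+1$ nonempty pieces and color the pieces with all $n+1$ colors, so that every $w : A \to C$ would see at least $n+1$ colors on $w \cdot \Aut(A) \subseteq w \cdot \hom(A, A)$, contradicting the Ramsey bound. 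Together these observations give the ``only if'' direction and reduce the equality to the case where $m := |\Aut(A)|$ and $r := \tilde t(A)$ are both finite.

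For the upper bound $t(A) \le mr$, given a $k$-coloring $\chi$ of $\hom(A, C)$ I would compress it to a $2^k$-valued coloring $\chi^\sharp$ of $\binom{C}{A}$ sending each orbit $O$ to the subset $\chi(O) \subseteq k$; applying $\tilde t(A) = r$ with $2^k$ colors produces $C$ and $w$ for which $\chi^\sharp(w \cdot \binom{B}{A})$ contains at most $r$ subsets, each of size at most $m$, so their union (which equals $\chi(w \cdot \hom(A, B))$) has at most $mr$ elements. For the matching lower bound, I would take $B^*, k^*$ witnessing that the structural property fails with $r-1$ colors, and for each candidate $C$ refine the resulting bad coloring $\chi^*$ of $\binom{C}{A}$ to a $k^* \cdot m$-coloring of $\hom(A, C)$ by adjoining a second coordinate that realizes each element of $\Aut(A)$ exactly once on every orbit; on any $w \cdot O$ with $O \in \binom{B^*}{A}$ all $m$ second coordinates appear paired with the single first coordinate $\chi^*(w \cdot O)$, yielding exactly $m \cdot |\chi^*(w \cdot \binom{B^*}{A})| \ge mr$ colors on $w \cdot \hom(A, B^*)$.

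The main obstacle is setting up the orbit decomposition cleanly: one needs $w \cdot \hom(A, B) = \bigsqcup_{O \in \binom{B}{A}} w \cdot O$ as a disjoint union of $\bigl|\binom{B}{A}\bigr|$ full $\sim_A$-orbits of $\hom(A, C)$, each of size $m$. This is where the monicity hypothesis is used twice --- to make the $\Aut(A)$-action free and to make post-composition injective --- and once this is in place both bounds become straightforward counting.
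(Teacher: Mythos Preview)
The paper does not provide its own proof of this proposition; it simply quotes the result from \cite{Zucker-1} and \cite{masul-kpt} and moves on. Your argument is essentially the standard one used in those references: exploit the free right $\Aut(A)$-action on each $\hom(A,B)$ (granted by monicity) to move between colourings of morphisms and colourings of $\sim_A$-orbits, compressing an embedding colouring to a ``set-of-colours'' colouring of $\binom{C}{A}$ for the upper bound and refining a bad structural colouring by an intra-orbit coordinate for the lower bound. Both directions are set up correctly.

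One small slip to fix in your write-up. In the step showing that $t(A)=n<\infty$ forces $|\Aut(A)|<\infty$, you partition a \emph{single} infinite orbit of $\hom(A,C)$ into $n+1$ colour classes and then assert that \emph{every} $w:A\to C$ sees all $n+1$ colours on $w\cdot\Aut(A)$. As written this only covers those $w$ lying in that particular orbit. The repair is immediate: since the action is free, \emph{every} $\sim_A$-orbit has cardinality $|\Aut(A)|$, so if $\Aut(A)$ is infinite you can (and must) split \emph{each} orbit across all $n+1$ colours --- e.g.\ fix a surjection $\sigma:\Aut(A)\to n{+}1$ and a transversal $(f_O)_O$ for the orbits, and colour $f_O\cdot\alpha$ by $\sigma(\alpha)$. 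Then $w\cdot\Aut(A)$ is the full orbit of $w$ and carries all $n+1$ colours for every $w$, giving the desired contradiction. With this adjustment your proof goes through.
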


The following lemma will be useful in the sequel.

\begin{LEM}{[folklore]}\label{cerp.lem.easy}
  Let $\CC$ be a locally small category whose morphisms are mono. Let $k, t \in \NN$ be positive integers and
  $A, B, C, D \in \Ob(\CC)$.

  $(a)$
  If $C \overset\sim\longrightarrow (B)^{A}_{k,t}$ and $D \to B$ then $C \overset\sim\longrightarrow (D)^{A}_{k, t}$.

  $(b)$
  If $C \longrightarrow (B)^{A}_{k, t}$ and $D \to B$ then $C \longrightarrow (D)^{A}_{k, t}$.

  $(c)$
  If $C \overset\sim\longrightarrow (B)^{A}_{k, t}$ and $C \to D$ then $D \overset\sim\longrightarrow (B)^{A}_{k, t}$.

  $(d)$
  If $C \longrightarrow (B)^{A}_{k, t}$ and $C \to D$ then $D \longrightarrow (B)^{A}_{k, t}$.
\end{LEM}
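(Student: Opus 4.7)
The plan is to prove all four parts by the same two templates: parts (a) and (b) are handled by \emph{postcomposing} the witness with the given arrow $D \to B$, and parts (c) and (d) are handled by \emph{pulling back} the coloring through the given arrow $C \to D$. The mono hypothesis plays no essential role in the argument; it simply guarantees that the notation $\subobj{}{A}{B}$ behaves as expected (in particular, that left-composition by a fixed morphism descends to $\sim_A$-equivalence classes).

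For (b), fix $h \in \hom(D, B)$ (which exists since $D \to B$) and let $\chi : \hom(A, C) \to k$ be an arbitrary coloring. By the hypothesis $C \longrightarrow (B)^{A}_{k, t}$ there is some $w' : B \to C$ with $|\chi(w' \cdot \hom(A, B))| \le t$. Set $w := w' \cdot h : D \to C$. Since every morphism $A \to D$ composed with $h$ lies in $\hom(A, B)$, we have
\[
  w \cdot \hom(A, D) \;=\; w' \cdot h \cdot \hom(A, D) \;\subseteq\; w' \cdot \hom(A, B),
\]
so $|\chi(w \cdot \hom(A, D))| \le t$, witnessing $C \longrightarrow (D)^{A}_{k, t}$. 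For (a) the argument is identical, but applied to a coloring $\chi : \subobj{}{A}{C} \to k$; one needs to verify that the inclusion above makes sense on $\sim_A$-classes, which follows from the observation that if $f = g \cdot \alpha$ with $\alpha \in \Aut(A)$ then $h \cdot f = h \cdot g \cdot \alpha$, so left-composition by $h$ is well-defined on $\subobj{}{A}{D} \to \subobj{}{A}{B}$.

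For (d), fix $e \in \hom(C, D)$ and let $\chi : \hom(A, D) \to k$ be a coloring. Pull $\chi$ back to a coloring $\chi' : \hom(A, C) \to k$ defined by $\chi'(f) := \chi(e \cdot f)$. Apply $C \longrightarrow (B)^{A}_{k, t}$ to $\chi'$ to obtain $w' : B \to C$ with $|\chi'(w' \cdot \hom(A, B))| \le t$, and set $w := e \cdot w' : B \to D$. Then
\[
  \chi(w \cdot \hom(A, B)) \;=\; \chi(e \cdot w' \cdot \hom(A, B)) \;=\; \chi'(w' \cdot \hom(A, B)),
\]
so the image has size at most $t$, as required. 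For (c) the same pullback works on $\sim_A$-classes once we verify that $f / \Boxed{\sim_A} \mapsto (e \cdot f) / \Boxed{\sim_A}$ is well-defined, which is the same one-line computation as above.

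There is no real obstacle; the only point worth pausing over is the well-definedness of left-composition on $\sim_A$-classes needed in (a) and (c), and this is immediate. The whole lemma is, as the statement indicates, truly folklore: it simply records that a Ramsey arrow is preserved when the target of the middle object shrinks (factoring through $D \to B$) or when the outer object grows (factoring through $C \to D$).
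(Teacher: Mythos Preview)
Your proof is correct and is exactly the standard argument for these facts. The paper itself does not supply a proof of this lemma: it is labeled ``folklore'' and stated without proof, so there is nothing to compare against beyond noting that your two templates (postcompose the witness with $h : D \to B$ for (a)--(b), and pull back the coloring along $e : C \to D$ for (c)--(d)) are precisely the intended routine verifications.
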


\paragraph{Convention ($\dagger$).} 
Let $\NN_\infty = \NN \union \{\infty\} = \{1, 2, 3, \ldots, \infty \}$.
The usual linear order on the positive integers extends to $\NN_\infty$ straightforwardly:
$
  1 < 2 < \ldots < \infty
$.
Ramsey degrees take their values in $\NN_\infty$, so when we write
$t_1 \ge t_2$ for some Ramsey degrees $t_1$ and $t_2$ then
  $t_1, t_2 \in \NN$ and $t_1 \ge t_2$; or
  $t_1 = \infty$ and $t_2 \in \NN$; or
  $t_1 = t_2 = \infty$.
For notational convenience, if $A$ is an infinite set we shall simply write
$|A| = \infty$ regardless of the actual cardinal~$|A|$. Hence, if $t$ is a Ramsey degree
and $A$ is a set, by $t \ge |A|$ we mean the following:
  $t \in \NN$, $|A| \in \NN$ and $t \ge |A|$; or
  $t = \infty$ and $|A| \in \NN$; or
  $A$ is an infinite set and $t = \infty$.
On the other hand, if $A$ and $B$ are sets then $|A| \ge |B|$ has the usual meaning.

With this convention in mind Proposition~\ref{rdbas.prop.sml} takes the following much simpler form:
$t(A) = |\Aut(A)| \cdot \tilde t(A)$ for all $A \in \Ob(\CC)$.

\section{Subcategories and functors}
\label{rpppg.sec.subca-ftr}

In this section we consider the behavior of Ramsey degrees under functors and in subcategories.
Simpler versions of the results presented here can be found in \cite{masul-drp} and \cite{masul-kpt}.
However, for the results presented in Sections~\ref{rpppg.sec.prod-pb} and~\ref{rpppg.sec.grothendieck}
we need the generalizations that we now prove.

A functor $F : \CC \to \DD$ \emph{preserves automorphism groups}
if $F(\Aut_\CC(A)) = \Aut_\DD(F(A))$ for all $A \in \Ob(\CC)$.

\begin{LEM}\label{rament.lem.cofin-full-faith-weak}
  Let $\CC$ and $\DD$ be locally small categories whose morphisms are mono, and let
  $F : \CC \to \DD$ be a full functor.
  
  $(a)$ If $C \longrightarrow (B)^A_{k, t}$ for some $A, B, C \in \Ob(\CC)$ and $k, t \in \NN$
  then $F(C) \longrightarrow (F(B))^{F(A)}_{k, t}$.
  
  $(b)$ If $F$ preserves automorphism groups and $C \overset\sim\longrightarrow (B)^A_{k, t}$ for some
  $A, B, C \in \Ob(\CC)$ and $k, t \in \NN$ then $F(C) \overset\sim\longrightarrow (F(B))^{F(A)}_{k, t}$.

  $(c)$ If $F(\CC)$ is cofinal in $\DD$ then $t_\DD(F(A)) \le t_\CC(A)$ for all $A \in \Ob(\CC)$.

  $(d)$ If $F$ preserves automorphism groups
  and $F(\CC)$ is cofinal in $\DD$ then $\tilde t_\DD(F(A)) \le \tilde t_\CC(A)$
  for all $A \in \Ob(\CC)$.
\end{LEM}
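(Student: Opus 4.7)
The four claims form a natural hierarchy: (a) and (b) are the core transport results, and (c), (d) follow from them by combining cofinality with Lemma~\ref{cerp.lem.easy}. I would prove them in that order, treating the possibility $t_\CC(A) = \infty$ or $\tilde t_\CC(A) = \infty$ in (c), (d) as vacuous.

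For (a), I pull the coloring $\chi : \hom(F(A), F(C)) \to k$ back to $\CC$ by $\chi'(f) := \chi(F(f))$, apply the hypothesis $C \longrightarrow (B)^A_{k,t}$ to obtain a witness $w_0 : B \to C$ satisfying $|\chi'(w_0 \cdot \hom(A,B))| \le t$, and push it forward as $w := F(w_0)$. The heart of the matter is the identity
\[
  w \cdot \hom(F(A), F(B)) = F\bigl(w_0 \cdot \hom(A, B)\bigr),
\]
which combines functoriality with fullness of $F$ (the latter ensures the left-hand side is exhausted by the right). Applying $\chi$ to both sides then matches $\chi'(w_0 \cdot \hom(A, B))$, which has at most $t$ elements by construction.

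Part (b) follows the same template at the level of subobject classes. The preparatory observation is that $F$ descends to a well-defined map
\[
  \bar F : \binom{B}{A} \to \binom{F(B)}{F(A)}, \qquad [f]_{\sim_A} \longmapsto [F(f)]_{\sim_{F(A)}},
\]
whose well-definedness is automatic from functoriality (any $\alpha \in \Aut_\CC(A)$ maps into $\Aut_\DD(F(A))$) and whose surjectivity is exactly fullness of $F$. Pulling $\chi : \binom{F(C)}{F(A)} \to k$ back along $\bar F$, invoking the Ramsey hypothesis in $\CC$, and pushing forward the resulting witness as $w := F(w_0)$ reduces the claim to the identity
\[
  w \cdot \binom{F(B)}{F(A)} = \bar F\bigl(w_0 \cdot \binom{B}{A}\bigr),
\]
once again by fullness. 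The main technical point I expect to be careful about is the bookkeeping of the two equivalence relations: it is here that the automorphism-preservation hypothesis $F(\Aut_\CC(A)) = \Aut_\DD(F(A))$ keeps the interplay between $\Aut_\CC(A)$-orbits and $\Aut_\DD(F(A))$-orbits compatible, so that the passage back and forth through $\bar F$ does not distort cardinalities of $\chi$-images.

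For (c) and (d), fix any $D \in \Ob(\DD)$ and any $k \in \NN$. Cofinality of $F(\CC)$ in $\DD$ supplies some $B \in \Ob(\CC)$ together with a $\DD$-morphism $D \to F(B)$; the definition of $t_\CC(A)$ (resp.\ $\tilde t_\CC(A)$) supplies some $C \in \Ob(\CC)$ witnessing $C \longrightarrow (B)^A_{k, t_\CC(A)}$ (resp.\ $C \overset\sim\longrightarrow (B)^A_{k, \tilde t_\CC(A)}$); part (a) (resp.\ (b)) transports this arrow to $F(C) \longrightarrow (F(B))^{F(A)}_{k, t_\CC(A)}$ (resp.\ $F(C) \overset\sim\longrightarrow (F(B))^{F(A)}_{k, \tilde t_\CC(A)}$); and Lemma~\ref{cerp.lem.easy}$(b)$ (resp.\ $(a)$), applied with the arrow $D \to F(B)$, substitutes $D$ for $F(B)$ on the right. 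Since $k$ and $D$ were arbitrary, this yields $t_\DD(F(A)) \le t_\CC(A)$ and $\tilde t_\DD(F(A)) \le \tilde t_\CC(A)$.
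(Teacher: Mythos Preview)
Your proposal is correct and follows essentially the same route as the paper: pull the coloring back along $F$, apply the Ramsey hypothesis in $\CC$, push the witness forward, and use fullness to verify the key set identity; parts (c) and (d) are then derived from (a) and (b) via cofinality and Lemma~\ref{cerp.lem.easy} exactly as you describe. The only organizational difference is in (b): the paper isolates two claims --- that $F(f/\!\sim_A)=F(f)/\!\sim_{F(A)}$ as sets, and that $F(w\cdot\binom{B}{A})=F(w)\cdot\binom{F(B)}{F(A)}$ --- and invokes automorphism preservation explicitly in the $\supseteq$ direction of the first, whereas you package the same content as well-definedness and surjectivity of the induced map $\bar F$ on subobject classes; these are equivalent formulations of the same bookkeeping.
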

\begin{proof}
  $(a)$ Take any coloring $\chi : \hom_\DD(F(A), F(C)) \to k$ and define $\chi' : \hom_\CC(A, C) \to k$
  by $\chi'(f) = \chi(F(f))$. Since $C \longrightarrow (B)^A_{k, t}$ there is a $w \in \hom_\CC(B, C)$
  such that $|\chi'(w \cdot \hom_\CC(A, B))| \le t$. Now, $F$ is a full functor, so
  $$
    F(w) \cdot \hom_\DD(F(A), F(B)) = F(w \cdot \hom_\CC(A, B)),
  $$
  whence
  $
    |\chi(F(w) \cdot \hom_\DD(F(A), F(B)))| = |\chi(F(w \cdot \hom_\CC(A, B)))|
    = |\chi'(w \cdot \hom_\CC(A, B))| \le t
  $.
  
  \medskip

  $(b)$ Let $F$ be a full functor which preserves automorphism groups.

  \medskip

  Claim 1. $F(f / \Boxed{\sim_A}) = F(f) / \Boxed{\sim_{F(A)}}$ for all $A, B \in \Ob(\CC)$ and $f \in \hom_\CC(A, B)$.

  Proof. Let us only prove inclusion $(\supseteq)$. Take any $g \in F(f) / \Boxed{\sim_{F(A)}}$.
  Then $g = F(f) \cdot \beta$ for some $\beta = \Aut_\DD(F(A))$. Since 
  $\Aut_\DD(F(A)) = F(\Aut_\CC(A))$ there is an $\alpha \in \Aut_\CC(A)$ such that
  $F(\alpha) = \beta$, so $g = F(f) \cdot F(\alpha) = F(f \cdot \alpha) \in F(f / \Boxed{\sim_A})$.

  \medskip

  Claim 2. $F(w \cdot \subobj{} AB) = F(w) \cdot \subobj{}{F(A)}{F(B)}$ for all $A, B, C \in \Ob(\CC)$ and
  $w \in \hom_\CC(B, C)$.

  Proof. Let us only prove inclusion $(\supseteq)$. Take any
  $g \in F(w) \cdot \subobj{}{F(A)}{F(B)}$. Then
  $g = F(w) \cdot h / \Boxed{\sim_{F(A)}}$ for some $h \in \hom_\DD(F(A), F(B))$.
  Since $F$ is full there is an $f \in \hom_\CC(A, B)$ such that $F(f) = h$.
  Therefore, $g = F(w) \cdot F(f) / \Boxed{\sim_{F(A)}}
  = F(w \cdot f) / \Boxed{\sim_{F(A)}} = F(w \cdot f / \Boxed{\sim_A})$ by Claim~1.

  \medskip

  Let us now proceed with the proof.
  Take any coloring $\chi : \subobj{}{F(A)}{F(C)} \to k$ and define $\chi' : \subobj{}AC \to k$
  by $\chi'(f / \Boxed{\sim_A}) = \chi(F(f  / \Boxed{\sim_A})) = \chi(F(f) / \Boxed{\sim_{F(A)}})$
  by Claim~1. Since $C \overset\sim\longrightarrow (B)^A_{k, t}$ there is a $w \in \hom_\CC(B, C)$
  such that $|\chi'(w \cdot \subobj{}AB)| \le t$.
  But $\chi'(w \cdot \subobj{}AB) = \chi(F(w \cdot \subobj{}AB))
  = \chi(F(w) \cdot \subobj{}{F(A)}{F(B)})$ by Claim~2.
  Therefore, $|\chi(F(w) \cdot \subobj{}{F(A)}{F(B)})| \le t$.

  \medskip

  $(c)$ If $t_\CC(A) = \infty$ the statement is trivially true. Assume, therefore, that
  $t_\CC(A) = t \in \NN$ and let us show that $t_\DD(F(A)) \le t$.
  Take any $D \in \Ob(\DD)$ and $k \in \NN$.
  Since $F(\CC)$ is cofinal in $\DD$ there is a $B \in \Ob(\CC)$ such
  that $D \toDD F(B)$. From $t_\CC(A) = t$ it follows that there is a $C \in \Ob(\CC)$
  such that $C \longrightarrow (B)^A_{k, t}$. Then by $(a)$
  we have that $F(C) \longrightarrow (F(B))^{F(A)}_{k, t}$. Finally, $D \toDD F(B)$
  implies $F(C) \longrightarrow (D)^{F(A)}_{k, t}$ by Lemma~\ref{cerp.lem.easy}.
  This completes the proof that $t_\DD(F(A)) \le t$.

  \medskip

  $(d)$ The proof is analogous to $(c)$.
\end{proof}

\begin{LEM}\label{rament.lem.cofin-full-faith}
  Let $\CC$ and $\DD$ be locally small categories whose morphisms are mono, and let
  $F : \CC \to \DD$ be a full and faithful functor.
  
  $(a)$ $C \longrightarrow (B)^A_{k, t}$ if and only if $F(C) \longrightarrow (F(B))^{F(A)}_{k, t}$,
  for all $A, B, C \in \Ob(\CC)$ and $k, t \in \NN$.
  
  $(b)$ $C \overset\sim\longrightarrow (B)^A_{k, t}$ if and only if $F(C) \overset\sim\longrightarrow (F(B))^{F(A)}_{k, t}$,
  for all $A, B, C \in \Ob(\CC)$ and $k, t \in \NN$.

  $(c)$ If $F(\CC)$ is cofinal in $\DD$ then $t_\CC(A) = t_\DD(F(A))$
  and $\tilde t_\CC(A) = \tilde t_\DD(F(A))$ for all $A \in \Ob(\CC)$.
\end{LEM}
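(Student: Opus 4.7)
The plan is to reduce everything to Lemma~\ref{rament.lem.cofin-full-faith-weak}. The key preliminary observation is that a full and faithful functor $F$ automatically preserves automorphism groups: $F$ restricts to a bijection $\hom_\CC(A,A) \to \hom_\DD(F(A),F(A))$ compatible with identities and composition, so $f$ is invertible exactly when $F(f)$ is, and this bijection restricts to $\Aut_\CC(A) \to \Aut_\DD(F(A))$. With this in hand, the forward implications ``$\Rightarrow$'' of (a) and (b) are immediate applications of Lemma~\ref{rament.lem.cofin-full-faith-weak}(a),(b).

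For the converse implications I pull colorings back along $F$ in the opposite direction from the weak lemma. Given $\chi' : \hom_\CC(A,C) \to k$, full and faithful means $F$ is a bijection $\hom_\CC(A,C) \to \hom_\DD(F(A),F(C))$, so $\chi(F(f)) = \chi'(f)$ defines a coloring $\chi$ on $\hom_\DD(F(A),F(C))$. The hypothesis $F(C) \longrightarrow (F(B))^{F(A)}_{k,t}$ produces $w' : F(B) \to F(C)$ with $|\chi(w' \cdot \hom_\DD(F(A),F(B)))| \le t$; by fullness $w' = F(w)$ for a unique $w \in \hom_\CC(B,C)$, and $\hom_\DD(F(A),F(B)) = F(\hom_\CC(A,B))$, so unwinding the definition of $\chi$ gives $|\chi'(w \cdot \hom_\CC(A,B))| \le t$, proving~(a). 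The argument for~(b) is identical once Claims~1 and~2 from the proof of the weak lemma are upgraded to \emph{bijections} $\subobj{}AB \to \subobj{}{F(A)}{F(B)}$: injectivity follows from faithfulness together with automorphism-group preservation, surjectivity from fullness.

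Part~(c) now combines what is available. One inequality in each case is already given by Lemma~\ref{rament.lem.cofin-full-faith-weak}(c),(d). For the reverse, suppose $t_\DD(F(A)) = t \in \NN$ and fix $B \in \Ob(\CC)$ and $k \in \NN$: pick $D \in \Ob(\DD)$ with $D \longrightarrow (F(B))^{F(A)}_{k,t}$, use cofinality of $F(\CC)$ in $\DD$ to find $C \in \Ob(\CC)$ and a morphism $D \to F(C)$, apply Lemma~\ref{cerp.lem.easy}(d) to obtain $F(C) \longrightarrow (F(B))^{F(A)}_{k,t}$, and then invoke~(a) to deduce $C \longrightarrow (B)^A_{k,t}$, yielding $t_\CC(A) \le t$. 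The case of $\tilde t$ is identical using~(b) and Lemma~\ref{cerp.lem.easy}(c).

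No step is truly delicate; the only point that is not pure bookkeeping is the upgrade of Claims~1 and~2 of the preceding lemma from inclusions ``$\supseteq$'' to full bijections in the converse of~(b), which is precisely where faithfulness does the work that fullness alone did not suffice for in Lemma~\ref{rament.lem.cofin-full-faith-weak}(b).
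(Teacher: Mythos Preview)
Your proposal is correct and follows essentially the same approach as the paper's proof: both reduce the forward implications to Lemma~\ref{rament.lem.cofin-full-faith-weak}, handle the converse of (a) and (b) by pulling colorings back along the bijection on homsets induced by full-and-faithfulness (using Claims~1 and~2 in the structural case), and derive (c) by combining the weak lemma's inequality with cofinality, Lemma~\ref{cerp.lem.easy}, and parts (a)/(b). The only cosmetic difference is that the paper does not explicitly frame the use of Claims~1 and~2 in (b) as ``upgrading to bijections,'' but that is precisely what is happening.
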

\begin{proof}
  Since $F$ is full and faithful, for each pair of objects $A, B \in \Ob(\CC)$ the functor $F$ induces a bijection
  $$
    F_{A,B} : \hom_{\CC}(A, B) \to \hom_{\DD}(F(A), F(B)) : f \mapsto F(f).
  $$
  The fact that $F$ is a functor immediately implies that
  $F_{A,A}(\Aut_{\CC}(A)) = \Aut_{\DD}(F(A))$ for all $A \in \Ob(\CC)$.
  Therefore, $F$ preserves automorphism groups. In particular, $|\Aut_\CC(A)| = |\Aut_\DD(F(A))|$
  for all $A \in \Ob(\CC)$.

  \medskip

  $(a)$ Implication $(\Rightarrow)$ was proved in Lemma~\ref{rament.lem.cofin-full-faith-weak}. Let us now prove $(\Leftarrow)$.
  Take any coloring $\chi : \hom_\CC(A, C) \to k$ and define
  $$
    \chi' : \hom_\DD(F(A), F(C)) \to k \text{ by } \chi'(F(f)) = \chi(f).
  $$
  Since $F(C) \longrightarrow (F(B))^{F(A)}_{k, t}$ there is an $F(w) \in \hom_\DD(F(B), F(C))$
  such that $|\chi'(F(w) \cdot \hom_\DD(F(A), F(B)))| \le t$. Note that
  $$
    F(w \cdot \hom_\CC(A, B)) = F(w) \cdot \hom_\DD(F(A), F(B))
  $$
  because $F$ is full and faithful. Therefore,
  \begin{align*}
    |\chi(w \cdot \hom_\CC(A, B))|  &= |\chi'(F(w \cdot \hom_\CC(A, B)))| \\
                                    &= |\chi'(F(w) \cdot \hom_\DD(F(A), F(B)))| \le t.
  \end{align*}

  \medskip

  $(b)$ Implication $(\Rightarrow)$ was proved in Lemma~\ref{rament.lem.cofin-full-faith-weak}. Let us now prove $(\Leftarrow)$.
  Take any coloring $\chi : \subobj{}{A}{C} \to k$ and having in mind Claims~1 and~2 from the proof of Lemma~\ref{rament.lem.cofin-full-faith-weak}
  define $\chi' : \subobj{}{F(A)}{F(C)} \to k$
  by $\chi'(F(f) / \Boxed{\sim_{F(A)}}) = \chi(f  / \Boxed{\sim_A})$.
  Since $F(C) \overset\sim\longrightarrow (F(B))^{F(A)}_{k, t}$ there is an $F(w) \in \hom_\CC(F(B), F(C))$
  such that $|\chi'(F(w) \cdot \subobj{}{F(A)}{F(B)})| \le t$. Note that
  \begin{align*}\textstyle
    \chi'(F(w) \cdot \subobj{}{F(A)}{F(B)})
    &\textstyle = \chi'(F(w \cdot \subobj{}{A}{B})) && \text{[Claim~2]}\\
    &\textstyle = \chi(w \cdot \subobj{}AB) && \text{[Claim~1]}.
  \end{align*}
  Therefore, $|\chi(w \cdot \subobj{}AB)| = |\chi'(F(w) \cdot \subobj{}{F(A)}{F(B)})| \le t$.

  \medskip

  $(c)$ The inequality $t_\DD(F(A)) \le t_\CC(A)$ was proved in Lemma~\ref{rament.lem.cofin-full-faith-weak}.
  Let us show that $t_\CC(A) \le t_\DD(F(A))$.
  If $t_\DD(F(A)) = \infty$ the statement is trivially true. Assume, therefore, that
  $t_\DD(F(A)) = t \in \NN$ and let us show that $t_\CC(A) \le t$.
  Take any $B \in \Ob(\CC)$ and $k \in \NN$. Then $t_\DD(F(A)) = t$ implies that
  there is a $D \in \Ob(\DD)$ such that $D \longrightarrow (F(B))^{F(A)}_{k, t}$.
  Since $F(\CC)$ is cofinal in $\DD$ there is a $C \in \Ob(\CC)$ such
  that $D \toDD F(C)$, so $F(C) \longrightarrow (F(B))^{F(A)}_{k, t}$ by Lemma~\ref{cerp.lem.easy}.
  Finally, $(a)$ gives us that $C \longrightarrow (B)^A_{k,t}$.
  This completes the proof that $t_\CC(A) \le t$.

  The proof of $\tilde t_\CC(A) = \tilde t_\DD(F(A))$ for all $A \in \Ob(\CC)$ follows by analogous
  arguments.
\end{proof}

\begin{COR}\label{crt.cor.cofinal}
  Let $\CC$ be a locally small category whose morphisms are mono, and let
  $\SS$ be a full cofinal subcategory of $\CC$. Then
  
  $(a)$ $t_\SS(A) = t_\CC(A)$ for all $A \in \Ob(\SS)$;

  $(b)$ $\tilde t_\SS(A) = \tilde t_\CC(A)$ for all $A \in \Ob(\SS)$;

  $(c)$ if $\CC$ is has finite small structural (embedding) Ramsey degrees
        then so does $\SS$;

  $(d)$ if $\CC$ is has the structural (embedding) Ramsey property
        then so does~$\SS$.
\end{COR}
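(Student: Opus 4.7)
The plan is to apply Lemma~\ref{rament.lem.cofin-full-faith} to the inclusion functor $F : \SS \to \CC$. Because $\SS$ is full in $\CC$, the inclusion is by definition both full and faithful; because $\SS$ is cofinal in $\CC$, the image $F(\SS) = \SS$ is cofinal in $\CC$ in the sense required by the lemma. Furthermore, $F$ acts as the identity on objects, so $F(A) = A$ for every $A \in \Ob(\SS)$. Local smallness and the hypothesis that morphisms are mono are both inherited by the full subcategory $\SS$ (homsets in $\SS$ coincide with homsets in $\CC$), so the hypotheses of the lemma are met.

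Parts $(a)$ and $(b)$ then follow immediately from Lemma~\ref{rament.lem.cofin-full-faith}$(c)$ applied to this $F$: the lemma yields $t_\SS(A) = t_\CC(F(A)) = t_\CC(A)$ and $\tilde t_\SS(A) = \tilde t_\CC(F(A)) = \tilde t_\CC(A)$ for all $A \in \Ob(\SS)$.

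Parts $(c)$ and $(d)$ are instant consequences. If every $A \in \Ob(\CC)$ has finite (structural or embedding) Ramsey degree, then in particular so does every $A \in \Ob(\SS)$, and by $(a)$ and $(b)$ the same finite value is the Ramsey degree of $A$ in $\SS$; this gives $(c)$. Part $(d)$ is the special case in which these Ramsey degrees are uniformly equal to~$1$.

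Since this corollary is essentially a direct translation of Lemma~\ref{rament.lem.cofin-full-faith}$(c)$ into the language of full cofinal subcategories, there is no genuine obstacle; the only point to verify is that all hypotheses of that lemma---full-and-faithfulness of $F$, monicity of morphisms in $\SS$, and cofinality of $F(\SS)$ in $\CC$---really are inherited by the inclusion of a full cofinal subcategory, which they are by inspection.
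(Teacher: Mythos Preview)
Your proposal is correct and matches the paper's own proof essentially verbatim: the paper likewise observes that the inclusion functor $F : \SS \to \CC$ is full and faithful with $F(\SS)$ cofinal in $\CC$, and then invokes Lemma~\ref{rament.lem.cofin-full-faith}. Your additional remarks on why parts $(c)$ and $(d)$ follow from $(a)$ and $(b)$ are a harmless elaboration of what the paper leaves implicit.
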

\begin{proof}
  This is an immediate consequence of Lemma~\ref{rament.lem.cofin-full-faith}: just
  note that the inclusion functor $F : \SS \to \CC$ given by $F(A) = A$ on objects and
  $F(f) = f$ on morphisms is full and faithful and that $F(\SS)$ is cofinal in $\CC$.
\end{proof}

Let us now consider the case where $F : \CC \to \DD$ is a functor which is not necessarily full
and show a sufficient condition for transferring the embedding Ramsey phenomena from a category onto
its (not necessarily full!) subcategory. The criterion we present here bears resemblance to
the model theoretic notion of being existentially closed: we show that if a subcategory is
``existentially closed'' in its supercategory and the supercategory has some sort of embedding
Ramsey phenomenon then the same phenomenon is present in the subcategory. As a corollary we show that
if there is a faithful functor taking a cocomplete category $\CC$ into a category $\DD$
with some sort of embedding Ramsey phenomenon then $\CC$ also has the same Ramsey phenomenon.

Consider a finite, acyclic, bipartite digraph with loops 
where all the arrows go from one class of vertices into the other
and the out-degree of all the vertices in the first class is~2 (modulo loops):
\begin{center}
  \begin{tikzcd}
    {\bullet} \arrow[loop above] & {\bullet} \arrow[loop above] & {\bullet} \arrow[loop above] & \ldots & {\bullet} \arrow[loop above] \\
    {\bullet} \arrow[loop below] \arrow[u] \arrow[ur] & {\bullet} \arrow[loop below] \arrow[ur] \arrow[ul] & {\bullet} \arrow[loop below] \arrow[u] \arrow[ur] & \ldots & {\bullet} \arrow[loop below] \arrow[u] \arrow[ull]
  \end{tikzcd}
\end{center}

\bigskip

Such a diagraph can be thought of as a category where the loops represent the identity morphisms, and will be referred to as
a \emph{binary category}. (Note that all the compositions in a binary category
are trivial since no nonidentity morphisms are composable.)

A \emph{binary diagram} in a category $\CC$ is a functor $F : \Delta \to \CC$ where $\Delta$ is a binary category,
$F$ takes the top row of $\Delta$ to the same object, and takes the bottom of $\Delta$ to the same object,
see Fig.~\ref{nrt.fig.2}. If $F$ takes the bottom row of the binary diagram $\Delta$ to an object $A$ and the top
row to an object $B$ then the diagram $F : \Delta \to \CC$ will be referred to as the \emph{$(A, B)$-diagram in $\CC$}.

\begin{figure}
\centering
\begin{tikzcd}
  {\bullet} & {\bullet} & {\bullet}
  & & B & B & B
\\
  {\bullet} \arrow[u] \arrow[ur] & {\bullet} \arrow[ur] \arrow[ul] & {\bullet} \arrow[ul] \arrow[u]
  & & A \arrow[u] \arrow[ur] & A \arrow[ur] \arrow[ul] & A \arrow[ul] \arrow[u]
\\
  & \Delta \arrow[rrrr, "F"]  & & & & \CC  
\end{tikzcd}
\caption{An $(A,B)$-diagram in $\CC$ (of shape $\Delta$)}
\label{nrt.fig.2}
\end{figure}
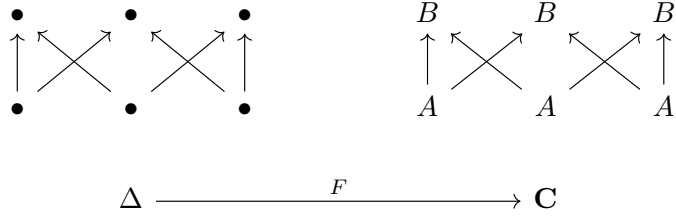

A \emph{walk} between two elements $x$ and $y$ of the top row of a binary category
consists of some vertices $x = t_0$, $t_1$, \dots, $t_k = y$ of the top row, some vertices
$b_1$, \dots, $b_k$ of the bottom row, and arrows $b_{j} \to t_{j-1}$ and $b_{j} \to t_{j}$, $1 \le j \le k$:
\begin{center}
  \begin{tikzcd}
    \llap{$x = \mathstrut$}t_0 & t_1 & \dots & t_{k-1} & t_k\rlap{$\mathstrut = y$} \\
    b_1 \arrow[u] \arrow[ur] & b_2 \arrow[u] \arrow[ur] & \dots \arrow[u] \arrow[ur] & b_k \arrow[u] \arrow[ur]
  \end{tikzcd}
\end{center}
A binary category is \emph{connected} if there is a walk between any pair of distinct vertices of the top row.
A \emph{connected component} of a binary category $\Delta$ is a maximal (with respect to inclusion) set $S$ of objects
of the top row such that there is a walk between any pair of distinct vertices from~$S$.

\begin{THM}\label{crt.thm.comp-cocone-general}
  Let $\CC$ and $\DD$ be locally small categories such that morphisms in both $\CC$ and $\DD$ are mono
  (and homsets are finite), and let $G : \DD \to \CC$ be a faithful functor.
  Assume that for any (finite) binary diagram $F : \Delta \to \DD$ in $\DD$ the following holds:
  if $GF : \Delta \to \CC$ has a compatible cocone in $\CC$ then $F : \Delta \to \DD$ has
  a compatible cocone in $\DD$. Then:

  $(a)$ $t_\DD(A) \le t_\CC(G(A))$ for all $A \in \Ob(\DD)$;

  $(b)$ if $\CC$ has finite small embedding Ramsey degrees then so does $\DD$;

  $(c)$ if $\CC$ has the embedding Ramsey property then so does~$\DD$.
  \end{THM}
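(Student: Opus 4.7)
Parts (b) and (c) are straightforward consequences of (a) via the definitions of the embedding Ramsey degree and the embedding Ramsey property, so the plan centers on (a). Fix $A \in \Ob(\DD)$ and set $t = t_\CC(G(A))$; nothing is to be shown if $t = \infty$, so assume $t \in \NN$. Given $B \in \Ob(\DD)$ and $k \in \NN$, I begin by invoking $t_\CC(G(A)) = t$ at the object $G(B)$ with $k$ colors, producing $C_0 \in \Ob(\CC)$ with $C_0 \longrightarrow (G(B))^{G(A)}_{k,t}$. The obstacle is that $C_0$ need not lie in the image of $G$, and the Ramsey witness $G(B) \to C_0$ provided by $\CC$ need not descend to a morphism in $\DD$, so $C_0$ cannot serve directly.

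The remedy is to manufacture an object $C \in \Ob(\DD)$ by applying the cocone-lifting hypothesis to a carefully chosen binary diagram. Set $U = \hom_\CC(G(B), C_0)$, which is finite by assumption, and build $F : \Delta \to \DD$ as the binary diagram whose top row is indexed by $U$ (each vertex sent to $B$), and whose bottom row has one vertex for every quadruple $(u_1, u_2, f_1, f_2) \in U^2 \times \hom_\DD(A, B)^2$ satisfying $u_1 \cdot G(f_1) = u_2 \cdot G(f_2)$, with its two outgoing arrows sent to $f_1$ and $f_2$ and pointing at the top vertices indexed by $u_1$ and $u_2$. By construction $GF$ admits a compatible cocone in $\CC$ with apex $C_0$ and cocone morphism $u$ at the top vertex labelled by $u$. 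The hypothesis lifts this to a compatible cocone of $F$ in $\DD$: an object $C \in \Ob(\DD)$ together with morphisms $v_u : B \to C$, $u \in U$, satisfying
\[
  v_{u_1} \cdot f_1 \;=\; v_{u_2} \cdot f_2
  \qquad \text{whenever} \qquad
  u_1 \cdot G(f_1) \;=\; u_2 \cdot G(f_2).
\]

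To verify $C \longrightarrow (B)^A_{k,t}$, take any $\chi : \hom_\DD(A, C) \to k$ and define an auxiliary coloring $\chi' : \hom_\CC(G(A), C_0) \to k$ by $\chi'(h) = \chi(v_u \cdot f)$ whenever $h$ admits a factorization $h = u \cdot G(f)$ with $u \in U$ and $f \in \hom_\DD(A, B)$, and by an arbitrary default color otherwise. Well-definedness across competing factorizations is precisely the content of the compatibility equations satisfied by the $v_u$'s. Applying $C_0 \longrightarrow (G(B))^{G(A)}_{k,t}$ to $\chi'$ yields $w \in U$ with $|\chi'(w \cdot \hom_\CC(G(A), G(B)))| \le t$. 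Since $w \cdot G(f)$ has the canonical factorization $(w, f)$ for every $f \in \hom_\DD(A, B)$, one has $\chi'(w \cdot G(f)) = \chi(v_w \cdot f)$, hence
\[
  \chi\bigl(v_w \cdot \hom_\DD(A, B)\bigr) \;\subseteq\; \chi'\bigl(w \cdot \hom_\CC(G(A), G(B))\bigr),
\]
a set of cardinality at most $t$; thus $v_w : B \to C$ is the required witness.

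The main obstacle is the design of the binary diagram: one must bake into $\Delta$ exactly those bottom vertices required to make $\chi'$ well-defined, while simultaneously ensuring the $G$-image of the diagram admits $C_0$ as a compatible cocone (with no extra constraints that $C_0$ would fail to satisfy). Once this combinatorial bookkeeping is aligned, the rest is a transparent transfer of Ramsey data through $G$ — the faithfulness of $G$ and the monicity of morphisms in $\CC$ and $\DD$ being used silently to identify $\hom_\DD(A, B)$ with its image inside $\hom_\CC(G(A), G(B))$.
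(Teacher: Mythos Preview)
Your argument is correct and follows essentially the same route as the paper: construct the binary diagram indexed by $\hom_\CC(G(B),C_0)$ on top and by coincidence-quadruples on the bottom, lift the tautological cocone through the hypothesis, and transfer a coloring of $\hom_\DD(A,C)$ to $\hom_\CC(G(A),C_0)$ via factorizations $h=u\cdot G(f)\mapsto v_u\cdot f$. The only cosmetic difference is that the paper defines the auxiliary coloring by specifying the color classes $X'_j$ for $j\ge 2$ and dumping everything else into $X'_1$, whereas you define $\chi'$ directly and assign a default color to non-factorizable morphisms; your phrasing is in fact slightly cleaner, since the well-definedness check is exactly the cocone compatibility and no asymmetric bookkeeping is needed. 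One quibble: in your closing sentence it is the \emph{faithfulness} of $G$, not monicity, that lets you identify $\hom_\DD(A,B)$ with its image in $\hom_\CC(G(A),G(B))$.
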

\begin{proof}
  $(a)$
  Take any $A \in \Ob(\DD)$ and assume that $t_\CC(G(A)) = t \in \NN$. To show that $t_\DD(A) \le t$
  take any $k \in \NN$ and $B \in \Ob(\DD)$ such that $A \toDD B$. Since $t_\CC(G(A)) = t$
  there is a $C \in \Ob(\CC)$ such that $C \longrightarrow (G(B))^{G(A)}_{k, t}$.
  
  Let us now construct a binary diagram in $\DD$ as follows. Let
  $$
    \hom_\CC(G(B), C) = \{e_i : i \in I\}.
  $$
  (In case of $\CC$ and $\DD$ having finite homsets the index set $I$ will be finite and then the diagram that we
  construct will also be finite; all the other elements of the proof remain unchanged.)
  Intuitively, for each $e_i \in \hom_\CC(G(B), C)$ we add a copy of $B$ to the diagram, and whenever $e_i \cdot G(u) = e_j \cdot G(v)$
  in $\CC$ for some $u, v \in \hom_\DD(A, B)$ we add a copy of A to the diagram together with two arrows: one going into the $i$th copy of $B$
  labelled by $u$ and another one going into the $j$th copy of $B$ labelled by~$v$:
  \begin{center}
    \begin{tikzcd}[execute at end picture={
                       \draw (-4.75,-1.75) rectangle (4.75,0.5);
                  }]
      & & C
    \\
      G(B) \arrow[urr] & G(B) \arrow[ur, near end, "e_i"'] & \ldots & G(B) \arrow[ul, near end, "e_j"] & G(B) \arrow[ull]
    \\
      G(A) \arrow[u] \arrow[ur] & G(A) \arrow[urr, "G(v)"'] \arrow[u, "G(u)"'] & \ldots & G(A) \arrow[ur] \arrow[u] & G(\DD)
    \end{tikzcd}
  \end{center}
  Note that, by the construction, the diagram $GF : \Delta \to \CC$ has a compatible cocone in $\CC$.

  Formally, let $\Delta$ be the binary category whose objects are
  $$
    \Ob(\Delta) = I \union \{(u, v, i, j) : i, j \in I; 
                                            u, v \in \hom_\DD(A, B); e_i \cdot G(u) = e_j \cdot G(v)\}
  $$
  and whose nonidentity arrows are
  $$
    \hom_\Delta((u, v, i, j), i) = \{u\} \text{\quad and\quad} \hom_\Delta((u, v, i, j), j) = \{v\}.
  $$
  Let $F : \Delta \to \DD$ be the following diagram whose action on objects is:
  $$
    F(i) = B \text{\quad and\quad} F((u, v, i, j)) = A
  $$
  for all $i, (u,v,i,j) \in \Ob(\Delta)$, and whose action on nonidentity morphisms is $F(g) = g$:
  \begin{center}
    \begin{tikzcd}[column sep=small]
    i &  & j
    & & B &  & B
  \\
      & (u, v, i, j) \arrow[ur, "v"'] \arrow[ul, "u"] &  
    & &   & A \arrow[ur, "v"'] \arrow[ul, "u"] & 
  \\
    & \Delta \arrow[rrrr, "F"]  & & & & \DD
    \end{tikzcd}
  \end{center}

  As we have already observed in the informal discussion above, the diagram $GF : \Delta \to \CC$ has a compatible cocone in $\CC$,
  so, by the assumption, the same holds for $F$ in~$\DD$. Therefore, there is
  a $D \in \Ob(\DD)$ and morphisms $f_i : B \to D$, $i \in I$, such that the following diagram in $\DD$ commutes:
  \begin{center}
    \begin{tikzcd}
        & & D
      \\
        B \arrow[urr] & B \arrow[ur, "f_i"'] & \ldots & B \arrow[ul,"f_j"] & B \arrow[ull]
      \\
        A \arrow[u] \arrow[ur] & A \arrow[urr,"v"] \arrow[u, "u"'] & \ldots & A \arrow[ur] \arrow[u] & 
    \end{tikzcd}
  \end{center}
  Let us show that in $\DD$ we have $D \longrightarrow (B)^A_{k, t}$. Take any $k$-coloring
  $$
    \hom_\DD(A, D) = X_1 \union \ldots \union X_k,
  $$
  and define a coloring
  $$
    \hom_\CC(G(A), C) = X'_1 \union \ldots \union X'_k
  $$
  as follows. For $j \in \{2, \ldots, k\}$ let
  $$
    X'_j = \{e_p \cdot G(u) : p \in I, u \in \hom_\DD(A, B), f_p \cdot u \in X_j \},
  $$
  and then let
  $
    X'_1 = \hom_\CC(G(A), C) \setminus \bigcup_{j=2}^k X'_j
  $.
  
  Let us show that $X'_1 \cup \ldots \cup X'_k$ is a coloring of $\hom_\CC(G(A), C)$, i.e.\ that
  $X'_i \sec X'_j = \0$ whenever $i \ne j$. By definition of $X'_1$
  it suffices to consider the case where $i \ge 2$ and $j \ge 2$. 
  Assume, to the contrary, that there is an $h \in X'_i \sec X'_j$ for some $i \ne j$, $i \ge 2$, $j \ge 2$.
  Then $h = e_p \cdot G(u)$ for some $p \in I$ and some $u \in \hom_\DD(A, B)$ such that $f_p \cdot u \in X_i$, and
  $h = e_q \cdot G(v)$ for some $q \in I$ and some $v \in \hom_\DD(A, B)$ such that $f_q \cdot v \in X_j$.
  Then
  $$
    e_p \cdot G(u) = h = e_q \cdot G(v),
  $$
  so, by definition of $\Delta$, we have that $(u, v, p, q) \in \Ob(\Delta)$.
  Consequently,
  $f_p \cdot u = f_q \cdot v$ because $D$ and morphisms $f_i : B \to D$, $i \in I$, constitute a compatible cocone for
  the diagram $F : \Delta \to \DD$ in~$\DD$. Therefore, $f_p \cdot u = f_q \cdot v \in X_i \sec X_j$ -- contradiction.
  
  Since $C \longrightarrow (G(B))^{G(A)}_{k, t}$, there is an $e_\ell \in \hom_\CC(G(B), C)$ and
  $j_1, \ldots, j_t \in \{1, \ldots, k\}$ such that
  \begin{equation}\label{crt.eq.bin-cat-t}
    e_\ell \cdot \hom_\CC(G(A), G(B)) \subseteq \bigcup_{m = 1}^t X'_{j_m}
  \end{equation}
  Let us show that $f_\ell \cdot \hom_\DD(A, B) \subseteq \bigcup_{m = 1}^t X_{j_m}$.
  Take any $u \in \hom_\DD(A, B)$. Then, because of \eqref{crt.eq.bin-cat-t}, there is an $s \in \{1, \ldots, t\}$ such that
  $e_\ell \cdot G(u) \in X'_{j_s}$. Let us show that $f_\ell \cdot u \in X_{j_s}$.
  If $j_s \ge 2$ then by definition of $X'_{j_s}$ we have that $f_\ell \cdot u \in X_{j_s}$.
  Assume, now, that $j_s = 1$ and suppose that $f_\ell \cdot u \notin X_1$. Then $f_\ell \cdot u \in X_n$
  for some $n \ge 2$. But then $e_\ell \cdot G(u) \in X'_n$. On the other hand, $e_\ell \cdot G(u) \in X'_1$ by assumption ($j_s = 1$),
  so $n \ne 1$ and $X'_1 \cap X'_n \ni e_\ell \cdot G(u)$ -- contradiction.
  Therefore, $f_\ell \cdot u \in X_{j_s}$ proving, thus, that $f_\ell \cdot \hom_\DD(A, B) \subseteq \bigcup_{m = 1}^t X_{j_m}$.

  \medskip

  $(b)$ and $(c)$ are immediate consequences of $(a)$.
\end{proof}

\begin{COR}\label{nrt.thm->cor.1}\cite{masul-drp}
  Let $\CC$ be a locally small category whose morphisms are mono (and homsets are finite)
  and let $\SS$ be a subcategory of $\CC$.
  Assume that for any (finite) binary diagram $F : \Delta \to \SS$ the following holds:
  if $F$ has a compatible cocone in $\CC$ then $F$ has
  a compatible cocone in $\SS$. Then:
  
  $(a)$ $t_\SS(A) \le t_\CC(A)$ for all $A \in \Ob(\SS)$;

  $(b)$ if $\CC$ has finite small embedding Ramsey degrees then so does $\SS$;
  
  $(c)$ if $\CC$ has the embedding Ramsey property then so does $\SS$.
\end{COR}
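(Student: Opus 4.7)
The plan is to obtain this corollary as a direct instance of Theorem~\ref{crt.thm.comp-cocone-general}. Take $\DD = \SS$ and let $G : \SS \to \CC$ be the inclusion functor, which sends every object and every morphism to itself. Being a restriction of the identity, $G$ is evidently faithful, and since any mono in $\CC$ restricts to a mono in $\SS$ (and any finite homset restricts to a finite homset), the hypotheses on $\CC$ pass to $\SS = \DD$ automatically.

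Under this identification, a (finite) binary diagram $F : \Delta \to \SS$ is literally a diagram in $\SS$, while the composite $GF : \Delta \to \CC$ is the same diagram regarded as living inside $\CC$. Thus the hypothesis of the corollary---any binary diagram $F : \Delta \to \SS$ which admits a compatible cocone in $\CC$ already admits one in $\SS$---is precisely the hypothesis of Theorem~\ref{crt.thm.comp-cocone-general} for this choice of $G$. One small sanity check is that ``compatible cocone of $GF$ in $\CC$'' means what one expects: since $G$ is the inclusion, the objects of the diagram and their arrows are unchanged, so a cocone in $\CC$ is just a cocone over $F$ whose apex and legs happen to lie in $\CC$ rather than in $\SS$.

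Now invoke Theorem~\ref{crt.thm.comp-cocone-general}(a) to obtain $t_\SS(A) \le t_\CC(G(A)) = t_\CC(A)$ for every $A \in \Ob(\SS)$, which is statement~(a). Parts~(b) and~(c) follow immediately from (a), since $\Ob(\SS) \subseteq \Ob(\CC)$: if $t_\CC(A) < \infty$ for all $A \in \Ob(\CC)$ then in particular $t_\SS(A) \le t_\CC(A) < \infty$ for all $A \in \Ob(\SS)$, giving~(b); and if $t_\CC(A) = 1$ for all $A \in \Ob(\CC)$ then $1 \le t_\SS(A) \le 1$ for all $A \in \Ob(\SS)$, giving~(c). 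There is no real obstacle to overcome here; the corollary is simply the theorem packaged for the special case in which $\DD$ sits inside $\CC$ as a subcategory and $G$ is the inclusion, with all the nontrivial work already done in the proof of Theorem~\ref{crt.thm.comp-cocone-general}.
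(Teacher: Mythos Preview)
Your proof is correct and follows exactly the same approach as the paper: take $G:\SS\to\CC$ to be the inclusion functor (which is faithful) and apply Theorem~\ref{crt.thm.comp-cocone-general}. The paper's version is simply the one-line remark that this choice of $G$ does the job, while you have (correctly) spelled out the easy verifications.
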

\begin{proof}
  Just take $G : \SS \to \CC$ to be the inclusion functor
  given by $G(A) = A$ on objects and $G(f) = f$ on morphisms and apply
  Theorem~\ref{crt.thm.comp-cocone-general}.
\end{proof}

We say that a category $\DD$ has \emph{amalgamation of (finite) binary diagrams} if every
(finite) binary diagram $F : \Delta \to \DD$ has a compatible cocone in $\DD$.

\begin{COR}\label{crt.cor.comp-cocone-amalg-closed}
  Let $\DD$ be locally small category (category with finite homsets) such that morphisms in $\DD$ are mono
  and assume that $\DD$ has amalgamation of (finite) binary diagrams.
  
  $(a)$ If there is a faithful functor $F : \DD \to \CC$ from $\DD$ into a locally small category
  (category with finite homsets) $\CC$ which has small embedding Ramsey degrees, then $\DD$ has small embedding Ramsey degrees
  and $t_\DD(A) \le t_\CC(F(A))$ for all $A \in \Ob(\DD)$.
  
  $(b)$ If there is a faithful functor from $\DD$ into a locally small category
  (category with finite homsets) $\CC$ which has the embedding Ramsey property, then $\DD$ has the embedding Ramsey property.

  $(c)$ If $\DD$ is a subcategory of a locally small category
  (category with finite homsets) $\CC$ which has small embedding Ramsey degrees, then $\DD$ has small embedding Ramsey degrees
  and $t_\DD(A) \le t_\CC(A)$ for all $A \in \Ob(\DD)$.
  
  $(d)$ If $\DD$ is a subcategory of a locally small category
  (category with finite homsets) $\CC$ which has the embedding Ramsey property, then $\DD$ has the embedding Ramsey property.
\end{COR}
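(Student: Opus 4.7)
The plan is to derive all four parts as direct applications of Theorem~\ref{crt.thm.comp-cocone-general}, exploiting the observation that amalgamation of binary diagrams in $\DD$ trivializes the main implicative hypothesis of that theorem. There is a small notational clash to keep in mind: the functor called $F : \DD \to \CC$ in the corollary plays the role of the functor called $G : \DD \to \CC$ in Theorem~\ref{crt.thm.comp-cocone-general}, while the letter $F$ in that theorem denotes an arbitrary binary diagram; to avoid confusion I will write binary diagrams as $H : \Delta \to \DD$ below.

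For parts $(a)$ and $(b)$ I would apply Theorem~\ref{crt.thm.comp-cocone-general} with its $G$ taken to be the given faithful functor $F : \DD \to \CC$. The hypothesis to verify reads: for every (finite) binary diagram $H : \Delta \to \DD$, if $FH : \Delta \to \CC$ has a compatible cocone in $\CC$, then $H$ has a compatible cocone in $\DD$. Since $\DD$ has amalgamation of (finite) binary diagrams, the consequent of this implication already holds unconditionally, so the implication is true regardless of whether $FH$ has a cocone in $\CC$. The theorem then delivers $t_\DD(A) \le t_\CC(F(A))$ for all $A \in \Ob(\DD)$, which is $(a)$; and $(b)$ is the special case in which the right-hand side equals~$1$ for every object of $\DD$.

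For parts $(c)$ and $(d)$ I would simply take $F$ to be the inclusion functor $\DD \hookrightarrow \CC$, which is automatically faithful, so that $(c)$ and $(d)$ follow from $(a)$ and $(b)$ respectively. Alternatively one can invoke Corollary~\ref{nrt.thm->cor.1} directly: its hypothesis requires that every binary diagram in $\DD$ admitting a cocone in $\CC$ also admits one in $\DD$, which is again vacuously ensured by amalgamation of binary diagrams in $\DD$.

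I do not foresee any real obstacle; the statement is essentially a clean packaging of Theorem~\ref{crt.thm.comp-cocone-general} together with the definition of amalgamation of binary diagrams. The only routine bookkeeping point is to keep the parenthetical ``finite'' qualifications in sync: when $\CC$ and $\DD$ have finite homsets, one uses the ``finite binary diagrams'' version of both the amalgamation hypothesis and the hypothesis of the theorem; otherwise one uses the unrestricted versions, and the argument is identical in either case.
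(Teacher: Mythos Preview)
Your proposal is correct and is precisely the intended argument: the paper states this corollary without proof because it follows immediately from Theorem~\ref{crt.thm.comp-cocone-general} (and Corollary~\ref{nrt.thm->cor.1} for the subcategory versions) once one notices that amalgamation of (finite) binary diagrams in $\DD$ makes the conditional hypothesis of that theorem hold vacuously. Your handling of the $F$/$G$ notational clash and the finite-homset bookkeeping is also exactly right.
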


\section{Products and pullbacks of categories}
\label{rpppg.sec.prod-pb}

Having taken care of the behavior of Ramsey degrees in subcategories,
we shall now turn to products and pullbacks of categories.
We start by showing that Ramsey degrees are multiplicative.

\begin{THM}\label{rament.thm.srd-mult}
  Let $\CC_1$ and $\CC_2$ be categories whose morphisms are mono and homsets are finite. Then
  for all $A_1 \in \Ob(\CC_1)$ and $A_2 \in \Ob(\CC_2)$:
  $$
    t_{\CC_1 \times \CC_2}(A_1, A_2) = t_{\CC_1}(A_1) \cdot t_{\CC_2}(A_2),
  $$
  and this holds even in case some of the above degrees are infinite
  (where we take $\infty \cdot \infty = \infty$). Consequently,
  $$
   \tilde t_{\CC_1 \times \CC_2}(A_1, A_2) = \tilde t_{\CC_1}(A_1) \cdot \tilde t_{\CC_2}(A_2).
  $$
\end{THM}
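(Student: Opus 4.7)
The plan is to prove the identity for the embedding degree $t$ by matching upper and lower bounds, and then deduce the identity for $\tilde t$. Throughout I write $t_i = t_{\CC_i}(A_i)$ and use the product-category identifications $\hom_{\CC_1 \times \CC_2}((A_1, A_2), (B_1, B_2)) = \hom(A_1, B_1) \times \hom(A_2, B_2)$ and $\Aut_{\CC_1 \times \CC_2}((A_1, A_2)) = \Aut(A_1) \times \Aut(A_2)$.

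For the lower bound $t_{\CC_1 \times \CC_2}(A_1, A_2) \ge t_1 t_2$, first assume both $t_i \in \NN$. By minimality of $t_i$, there exist $B_i \in \Ob(\CC_i)$ and $k_i \in \NN$ such that for every $C_i$ with $\hom(B_i, C_i) \ne \0$ some coloring $\chi_i : \hom(A_i, C_i) \to k_i$ satisfies $|\chi_i(w_i \cdot \hom(A_i, B_i))| \ge t_i$ for every $w_i : B_i \to C_i$. The product coloring $\chi(f_1, f_2) = (\chi_1(f_1), \chi_2(f_2))$ then sends $(w_1, w_2) \cdot \hom((A_1, A_2), (B_1, B_2))$ onto the full Cartesian product $\chi_1(w_1 \cdot \hom(A_1, B_1)) \times \chi_2(w_2 \cdot \hom(A_2, B_2))$, of size at least $t_1 t_2$. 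When some $t_i = \infty$, the analogous construction with $(B_1, A_2)$ (or $(A_1, B_2)$) and a single-factor coloring $\chi(f_1, f_2) = \chi_1(f_1)$ gives $t_{\CC_1 \times \CC_2}(A_1, A_2) > n$ for every $n \in \NN$.

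For the upper bound, the cases where some $t_i = \infty$ are vacuous, so assume $t_1, t_2 \in \NN$. Given $k \in \NN$ and $(B_1, B_2)$, I choose the two Ramsey witnesses in a staged order. Pick $D_2 \in \Ob(\CC_2)$ with $D_2 \longrightarrow (B_2)^{A_2}_{k^{t_1}, t_2}$ and set $N_2 := |\hom(A_2, D_2)| < \infty$; then pick $D_1 \in \Ob(\CC_1)$ with $D_1 \longrightarrow (B_1)^{A_1}_{k^{N_2}, t_1}$. Given $\chi : \hom(A_1, D_1) \times \hom(A_2, D_2) \to k$, curry it to a $k^{N_2}$-coloring $\chi^* : \hom(A_1, D_1) \to k^{\hom(A_2, D_2)}$, apply the $D_1$-witness to produce $w_1 : B_1 \to D_1$ with $\chi^*(w_1 \cdot \hom(A_1, B_1)) = \{\xi_1, \ldots, \xi_s\}$ of size $s \le t_1$, bundle the $\xi_j$ (padding to length $t_1$ if needed) into a single $k^{t_1}$-coloring $\Xi : \hom(A_2, D_2) \to k^{t_1}$, and apply the $D_2$-witness to produce $w_2 : B_2 \to D_2$ with $|\Xi(w_2 \cdot \hom(A_2, B_2))| \le t_2$. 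Since $\chi(w_1 g_1, w_2 g_2) = \xi_{i(g_1)}(w_2 g_2)$ and the pair $(i(g_1), \Xi(w_2 g_2))$ takes at most $s \cdot t_2 \le t_1 t_2$ distinct values, the bound $|\chi((w_1, w_2) \cdot \hom)| \le t_1 t_2$ follows.

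The identity for $\tilde t$ is then inherited by running the same upper and lower bound arguments with $\hom$ replaced by $\subobj{}{\cdot}{\cdot}$ and noting the induced factorization $\subobj{}{(A_1, A_2)}{(C_1, C_2)} = \subobj{}{A_1}{C_1} \times \subobj{}{A_2}{C_2}$; alternatively, in the case of finite automorphism groups one divides the $t$-identity using Proposition~\ref{rdbas.prop.sml} together with $|\Aut_{\CC_1 \times \CC_2}((A_1, A_2))| = |\Aut(A_1)| \cdot |\Aut(A_2)|$, while the infinite case is handled directly via Convention~($\dagger$). The hard part of the whole argument is the staged ordering of the choices of $D_2$ and $D_1$ in the upper bound: $D_2$ must be Ramsey for $k^{t_1}$ colors so that one $w_2$ tames all $s \le t_1$ curried colorings $\xi_j$ arising from a single $w_1$ simultaneously, and $D_1$ must be Ramsey for $k^{N_2}$ colors because currying inflates the palette exponentially; getting this interaction right is what pins the upper bound to the tight value $t_1 \cdot t_2$.
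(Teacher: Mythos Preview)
Your proof is correct and follows essentially the same strategy as the paper: product colorings for the lower bound, a staged/currying argument for the upper bound, and Proposition~\ref{rdbas.prop.sml} together with $|\Aut_{\CC_1 \times \CC_2}(A_1,A_2)| = |\Aut(A_1)|\cdot|\Aut(A_2)|$ for the passage from $t$ to~$\tilde t$.

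Two minor differences are worth recording. First, the paper does not prove the upper bound $t_{\CC_1\times\CC_2}(A_1,A_2)\le t_1 t_2$ here at all; it simply cites \cite[Theorem~3.3]{masul-kpt}. Your staged argument (choose $D_2$ Ramsey for $k^{t_1}$ colors, then $D_1$ Ramsey for $k^{|\hom(A_2,D_2)|}$ colors, curry, and unbundle) is exactly the standard proof of that cited result, so you have made the present proof self-contained. Second, your lower bound is organized more directly than the paper's: you pick bad witnesses $(k_i,B_i)$ in each factor and observe that the product coloring forces $\ge t_1 t_2$ colors on every copy of $(B_1,B_2)$ in any $(C_1,C_2)$, giving $t_{\CC_1\times\CC_2}(A_1,A_2)\ge t_1 t_2$ in one stroke. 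The paper instead argues contrapositively in two steps (Step~1: if the product degree is finite then each factor degree is finite; Step~2: if the product degree is $n$ and $t_2=q$ then $t_1\le\lfloor n/q\rfloor$), which is the same product-coloring idea threaded through a slightly longer computation. Your route is shorter; the paper's has the mild advantage that Step~2 isolates the implication ``product degree $n$ forces factor degree $\le\lfloor n/q\rfloor$'' as a standalone statement.
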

\begin{proof}
  The second part of the statement is an immediate consequence of the first part of the statement.
  Since homsets in both $\CC_1$ and $\CC_2$ are finite, the homsets in $\CC_1 \times \CC_2$ are also finite,
  whence follows that the automorphism groups in $\CC_1$, $\CC_2$ and $\CC_1 \times \CC_2$ are finite.
  Therefore, the second part of the statement follows from the first part of the statement,
  Proposition~\ref{rdbas.prop.sml} and the fact that
    $$
      |\Aut_{\CC_1 \times \CC_2}(A_1, A_2)| = |\Aut_{\CC_1}(A_1)| \cdot |\Aut_{\CC_2}(A_2)|.
    $$

  To show the first part of the statement take any $A_1 \in \Ob(\CC_1)$ and $A_2 \in \Ob(\CC_2)$.
  We have already proved in \cite[Theorem 3.3]{masul-kpt} that $t_{\CC_1 \times \CC_2}(A_1, A_2) \le t_{\CC_1}(A_1) \cdot t_{\CC_2}(A_2)$.
    In order to complete the proof we still have to show that
    $t_{\CC_1 \times \CC_2}(A_1, A_2) \ge t_{\CC_1}(A_1) \cdot t_{\CC_2}(A_2)$.
    Note that this is trivially true in case $t_{\CC_1 \times \CC_2}(A_1, A_2) = \infty$.
    Therefore, we now consider the case when $t_{\CC_1 \times \CC_2}(A_1, A_2) < \infty$.
  
    \bigskip
  
    Step 1. If $t_{\CC_1 \times \CC_2}(A_1, A_2) < \infty$ then $t_{\CC_1}(A_1) < \infty$ and $t_{\CC_2}(A_2) < \infty$.
    
    \medskip
    
    Proof. Suppose that $t_{\CC_1}(A_1) = \infty$ and let us show that $t_{\CC_1 \times \CC_2}(A_1, A_2) = \infty$, that is,
    $t_{\CC_1 \times \CC_2}(A_1, A_2) \ge n$ for all $n \in \NN$.
    
    Take any $n \in \NN$. Then $t_{\CC_1}(A_1) \ge n$ (because $t_{\CC_1}(A_1) = \infty$ by assumption) and $t_{\CC_2}(A_2) \ge 1$
    (trivially). Since $t_{\CC_1}(A_1) \ge n$ there is a $k_1 \in \NN$ and a $B_1 \in \Ob(\CC_1)$ such that
    \begin{align}
      (\forall &C_1 \in \Ob(\CC_1))(\exists \chi_1 : \hom_{\CC_1}(A_1, C_1) \to k_1) \nonumber\\
               &(\forall w_1 \in \hom_{\CC_1}(B_1, C_1)) \; |\chi_1(w_1 \cdot \hom_{\CC_1}(A_1, B_1))| \ge n. \label{rament.eq.x-1}
    \end{align}
    On the other hand, $t_{\CC_2}(A_2) \ge 1$ implies that there is a $k_2 \in \NN$ and a $B_2 \in \Ob(\CC_2)$ such that
    \begin{align}
      (\forall &C_2 \in \Ob(\CC_2))(\exists \chi_2 : \hom_{\CC_2}(A_2, C_2) \to k_2) \nonumber\\
               &(\forall w_2 \in \hom_{\CC_2}(B_2, C_2)) \; |\chi_2(w_2 \cdot \hom_{\CC_2}(A_2, B_2))| \ge 1. \label{rament.eq.x-2}
    \end{align}
    We are going to show that $k_1 \cdot k_2$ and $(B_1, B_2) \in \Ob(\CC_1 \times \CC_2)$ are the parameters we are looking
    for to show that $t_{\CC_1 \times \CC_2}(A_1, A_2) \ge n$.
  
    Take any $(C_1, C_2) \in \Ob(\CC_1 \times \CC_2)$ and choose $\chi_1 : \hom_{\CC_1}(A_1, C_1) \to k_1$
    whose existence is guaranteed by \eqref{rament.eq.x-1}, and $\chi_2 : \hom_{\CC_2}(A_2, C_2) \to k_2$
    whose existence is guaranteed by \eqref{rament.eq.x-2}. Let 
    $$
      \chi : \hom_{\CC_1 \times \CC_2}((A_1, A_2), (C_1, C_2)) \to k_1 \times k_2
    $$
    be the coloring defined by
    $$
      \chi(f_1, f_2) = (\chi_1(f_1), \chi_2(f_2))
    $$
    where we implicitly used the fact that
    $$
      \hom_{\CC_1 \times \CC_2}((A_1, A_2), (C_1, C_2)) = \hom_{\CC_1}(A_1, C_1) \times \hom_{\CC_2}(A_2, C_2).
    $$
    Finally, take any $(w_1, w_2) \in \hom_{\CC_1 \times \CC_2}((B_1, B_2), (C_1, C_2))$ and note that
    $$
      |\chi((w_1, w_2) \cdot \hom_{\CC_1 \times \CC_2}((A_1, A_2), (B_1, B_2)))| \ge n
    $$
    because
    \begin{align*}
     |\chi(&(w_1, w_2) \cdot \hom_{\CC_1 \times \CC_2}((A_1, A_2), (B_1, B_2)))| =\\
           & = |\chi_1(w_1 \cdot \hom_{\CC_1}(A_1, B_1)) \times \chi_2(w_2 \cdot \hom_{\CC_2}(A_2, B_2))| \ge\\
           & \ge |\chi_1(w_1 \cdot \hom_{\CC_1}(A_1, B_1))| \ge n.
    \end{align*}
    This concludes the proof of Step 1.
    
    \bigskip
    
    Step 2. If $t_{\CC_1 \times \CC_2}(A_1, A_2) = n$, $t_{\CC_1}(A_1) = p$ and $t_{\CC_2}(A_2) = q$ where
    $n, p, q \in \NN$ then $p \le \lfloor n/q \rfloor$, whence $n \ge p \cdot q$.
    
    \medskip
    
    Proof. Let us show that $t_{\CC_1}(A_1) \le \lfloor n/q \rfloor$. Take any
    $k_1 \in \NN$ and $B_1 \in \Ob(\CC_1)$.
    Since $t_{\CC_2}(A_2) \ge q$, there is a $k_2 \in \NN$ and $B_2 \in \Ob(\CC_2)$ such that
    \begin{align}
      (\forall &C_2 \in \Ob(\CC_2))(\exists \chi_2 : \hom_{\CC_2}(A_2, C_2) \to k_2) \nonumber\\
               &(\forall w_2 \in \hom_{\CC_2}(B_2, C_2)) \; |\chi_2(w_2 \cdot \hom_{\CC_2}(A_2, B_2))| \ge q. \label{rament.eq.x-3}
    \end{align}
    On the other hand, $t_{\CC_1 \times \CC_2}(A_1, A_2) \le n$, so for $(B_1, B_2)$ that we have just chosen
    and $k_1 \times k_2$ as the set of colors there is a $(C_1, C_2) \in \Ob(\CC_1 \times \CC_2)$ such that
    \begin{align}
      (\forall &\chi : \hom_{\CC_1 \times \CC_2}((A_1, A_2), (C_1, C_2)) \to k_1 \times k_2) \nonumber\\
               &(\exists (w_1, w_2) \in \hom_{\CC_1 \times \CC_2}((B_1, B_2), (C_1, C_2))) \nonumber \\
               &|\chi((w_1, w_2) \cdot \hom_{\CC_1 \times \CC_2}((A_1, A_2), (B_1, B_2)))| \le n. \label{rament.eq.x-4}
    \end{align}
    Let us show that $C_1 \longrightarrow (B_1)^{A_1}_{k_1, \lfloor n/q \rfloor}$.
    Take any $\chi_1 : \hom_{\CC_1}(A_1, C_1) \to k_1$. Having in mind~\eqref{rament.eq.x-3},
    for $C_2$ we have obtained in the previous paragraph there is a
    $\chi_2 : \hom_{\CC_2}(A_2, C_2) \to k_2$ satisfying
    \begin{equation}
      (\forall w_2 \in \hom_{\CC_2}(B_2, C_2)) \; |\chi_2(w_2 \cdot \hom_{\CC_2}(A_2, B_2))| \ge q.
      \label{rament.eq.x-5}
    \end{equation}
    Define
    $$
      \chi : \hom_{\CC_1 \times \CC_2}((A_1, A_2), (C_1, C_2)) \to k_1 \times k_2
    $$
    by
    $$
      \chi(f_1, f_2) = (\chi_1(f_1), \chi_2(f_2))
    $$
    where, as above, we implicitly used the fact that
    $$
      \hom_{\CC_1 \times \CC_2}((A_1, A_2), (C_1, C_2)) = \hom_{\CC_1}(A_1, C_1) \times \hom_{\CC_2}(A_2, C_2).
    $$
    Because of \eqref{rament.eq.x-4} there is a $(w_1, w_2) \in \hom_{\CC_1 \times \CC_2}((B_1, B_2), (C_1, C_2))$ such that
    $$
      |\chi((w_1, w_2) \cdot \hom_{\CC_1 \times \CC_2}((A_1, A_2), (B_1, B_2)))| \le n.
    $$
    Then
    \begin{align*}
     n \ge |\chi(&(w_1, w_2) \cdot \hom_{\CC_1 \times \CC_2}((A_1, A_2), (B_1, B_2)))| =\\
           & = |\chi_1(w_1 \cdot \hom_{\CC_1}(A_1, B_1)) \times \chi_2(w_2 \cdot \hom_{\CC_2}(A_2, B_2))| =\\
           & = |\chi_1(w_1 \cdot \hom_{\CC_1}(A_1, B_1))| \cdot |\chi_2(w_2 \cdot \hom_{\CC_2}(A_2, B_2))| \ge\\
           & = |\chi_1(w_1 \cdot \hom_{\CC_1}(A_1, B_1))| \cdot q
    \end{align*}
    because of \eqref{rament.eq.x-5}. Therefore,
    $$
      |\chi_1(w_1 \cdot \hom_{\CC_1}(A_1, B_1))| \le \lfloor n/q \rfloor.
    $$
    This concludes the proof of Step~2 and the proof of the theorem.
  \end{proof}

Let us now move on to the proof of the generalization of Theorem~\ref{rpppg.thm.bodirsky-1}.
In analogy to~\cite{KPT} we shall say that
a functor $F : \CC \to \DD$ is \emph{reasonable} if for every $C \in \Ob(\CC)$, every $B \in \Ob(\DD)$
and every $h \in \hom_\DD(F(C), B)$ there is a $D \in \Ob(\CC)$ and a $g \in \hom_\CC(C, D)$ such that
$F(D) = B$ and $F(g) = h$:
\begin{center}
  \begin{tikzcd}
    C \arrow[r, "g"] \arrow[d, dashed, mapsto, "F"'] & D \arrow[d, dashed, mapsto, "F"] & & \CC \arrow[d, "F"]\\
    F(C) \arrow[r, "h"'] & B & & \DD
  \end{tikzcd}
\end{center}

\begin{THM}\label{rament.thm.pullback}
  Let $\CC_1$ and $\CC_2$ be categories whose morphisms are mono and homsets are finite,
  let $\DD$ be a directed category and let $F_1 : \CC_1 \to \DD$ and $F_2 : \CC_2 \to \DD$
  be reasonable functors. Let $\PP$ be the pullback of
  $\CC_1 \overset{F_1}\longrightarrow \DD \overset{F_2}\longleftarrow \CC_2$. Then
  the following holds (with Convention~($\dagger$) in mind):
  
  $(a)$ $t_{\PP}(A_1, A_2) = t_{\CC_1}(A_1) \cdot t_{\CC_2}(A_2)$ for all $(A_1, A_2) \in \Ob(\PP)$;

  $(b)$ $\tilde t_{\PP}(A_1, A_2) = \tilde t_{\CC_1}(A_1) \cdot \tilde t_{\CC_2}(A_2)$ for all $(A_1, A_2) \in \Ob(\PP)$;

  $(c)$ if both $\CC_1$ and $\CC_2$ have finite small structural (embedding) Ramsey degrees then so does~$\PP$;

  $(d)$ if both $\CC_1$ and $\CC_2$ have the structural (embedding) Ramsey property then so does~$\PP$.
\end{THM}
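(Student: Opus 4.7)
The plan is to deduce (a) by comparing the pullback $\PP$ with the product $\CC_1 \times \CC_2$ via the canonical faithful inclusion $G : \PP \hookrightarrow \CC_1 \times \CC_2$, and then to leverage Theorem~\ref{rament.thm.srd-mult} together with the cocone-lifting machinery of Section~\ref{rpppg.sec.subca-ftr}. Part (b) will be derived from (a) using Proposition~\ref{rdbas.prop.sml} after a direct computation of $|\Aut_\PP(A_1, A_2)|$ extracted from the lifting construction, and parts (c) and (d) follow immediately from (a) and (b).

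For the upper bound $t_\PP(A_1, A_2) \le t_{\CC_1}(A_1) \cdot t_{\CC_2}(A_2)$, I would apply Theorem~\ref{crt.thm.comp-cocone-general} to $G$. The hypothesis to verify is that every binary diagram $F : \Delta \to \PP$ whose image $GF$ admits a compatible cocone $((E_1, E_2), (g_i^1, g_i^2))$ in $\CC_1 \times \CC_2$ also admits a compatible cocone in $\PP$. Starting from such a cocone, I would use directedness of $\DD$ to find a common target $H$ for $F_1(E_1)$ and $F_2(E_2)$, via morphisms $q_1 : F_1(E_1) \to H$ and $q_2 : F_2(E_2) \to H$, and then use reasonableness of $F_1, F_2$ to lift these to $\tilde q_1 : E_1 \to E_1'$ and $\tilde q_2 : E_2 \to E_2'$ with $F_1(E_1') = F_2(E_2') = H$. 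This yields a candidate apex $(E_1', E_2') \in \Ob(\PP)$ and candidate legs $(\tilde q_1 \cdot g_i^1, \tilde q_2 \cdot g_i^2)$.

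The main obstacle is that these candidate legs need not be morphisms of $\PP$: the equation $F_1(\tilde q_1 \cdot g_i^1) = F_2(\tilde q_2 \cdot g_i^2)$ in $\DD$ is not automatic after one lifting step. Since $\Delta$ is finite and the homsets of $\CC_1, \CC_2$ are finite, only finitely many offending pairs of parallel $\DD$-morphisms need to be reconciled. I would address this by iteration: at each round, use directedness of $\DD$ to push into a further common target that coequalizes one more pair, and lift the new morphism via reasonableness of $F_1, F_2$. Because the leg-set is fixed throughout the process, after finitely many rounds every pair is reconciled and the cocone legitimately lives in $\PP$; the inner commutativity relations of the cocone survive automatically by functoriality.

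For the lower bound $t_\PP(A_1, A_2) \ge t_{\CC_1}(A_1) \cdot t_{\CC_2}(A_2)$, I would replay the two-step argument from the proof of Theorem~\ref{rament.thm.srd-mult}. Given witnesses $(k_i, B_i)$ of bounds $t_{\CC_i}(A_i) \ge p_i$ for $i = 1, 2$, I would first use directedness and reasonableness to enlarge $(B_1, B_2)$ to an object $(B_1^*, B_2^*) \in \Ob(\PP)$ with $B_i \to B_i^*$ in $\CC_i$, obtained by lifting a common $\DD$-extension of $F_1(B_1), F_2(B_2)$. For any $(C_1, C_2) \in \Ob(\PP)$, I would combine bad colorings $\chi_i : \hom_{\CC_i}(A_i, C_i) \to k_i$ into a product coloring on $\hom_\PP((A_1, A_2), (C_1, C_2)) \subseteq \hom_{\CC_1}(A_1, C_1) \times \hom_{\CC_2}(A_2, C_2)$ and count colors on $w \cdot \hom_\PP((A_1, A_2), (B_1^*, B_2^*))$ exactly as in Step~2 of Theorem~\ref{rament.thm.srd-mult}. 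The delicate point is that restriction to $\PP$-morphisms might collapse the color count; here I would again exploit reasonableness to produce enough pullback-compatible morphisms $(A_1, A_2) \to (B_1^*, B_2^*)$ so that the full product of $p_1 \cdot p_2$ color combinations is realized.
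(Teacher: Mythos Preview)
The paper's proof is far shorter because it takes $\PP$ to be the \emph{full} subcategory of $\CC_1 \times \CC_2$ spanned by the objects $(C_1,C_2)$ with $F_1(C_1)=F_2(C_2)$; no condition $F_1(f_1)=F_2(f_2)$ is imposed on morphisms. Under that reading, directedness of $\DD$ together with reasonableness of $F_1,F_2$ immediately gives that $\PP$ is cofinal in $\CC_1\times\CC_2$ (exactly your first lifting step, applied once to an arbitrary object), and then Corollary~\ref{crt.cor.cofinal} yields the \emph{equality} $t_\PP(A_1,A_2)=t_{\CC_1\times\CC_2}(A_1,A_2)$ outright, with Theorem~\ref{rament.thm.srd-mult} supplying the product formula. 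Both $(a)$ and $(b)$ drop out at once; no separate lower bound, no cocone-lifting, and no automorphism-group computation are needed.

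Your proposal instead treats $\PP$ as the strict $\Cat$-pullback (with the morphism constraint), so it is not a full subcategory and you are forced to invoke Theorem~\ref{crt.thm.comp-cocone-general} for an inequality and then to rebuild the lower bound by hand. Even on that reading your argument has a genuine gap. In the cocone-lifting step you write that you would ``use directedness of $\DD$ to push into a further common target that coequalizes one more pair.'' Directedness only furnishes a common target for two \emph{objects}; it gives no way to make two parallel morphisms $F_1(\tilde q_1\cdot g_i^1)$ and $F_2(\tilde q_2\cdot g_i^2)$ agree after post-composition. Without an amalgamation-type hypothesis on $\DD$ there is no reason your iteration ever forces $F_1(\cdot)=F_2(\cdot)$ on the legs. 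The same issue recurs in your lower-bound sketch: reasonableness lifts a single $\DD$-morphism to a single $\CC_i$-morphism, but produces no mechanism for manufacturing pairs $(f_1,f_2)$ with $F_1(f_1)=F_2(f_2)$, so the claim that ``the full product of $p_1\cdot p_2$ color combinations is realized'' inside $\hom_\PP$ is unjustified.
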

\begin{proof}
  $(a)$ and $(b)$.
  Take $\PP$ to be the full subcategory of $\CC_1 \times \CC_2$ spanned by $(C_1, C_2)$ such that
  $F_1(C_1) = F_2(C_2)$ and let us show that $\PP$ is a cofinal subcategory of $\CC_1 \times \CC_2$.
  Take any $(C_1, C_2) \in \Ob(\CC_1 \times \CC_2)$. Since $\DD$ is directed, there is a $B \in \Ob(\BB)$
  such that $F_1(C_1) \toDD B$ and $F_2(C_2) \toDD B$. Take any $h_1 \in \hom_\DD(F_1(C_1), B)$ and
  $h_2 \in \hom_\DD(F_2(C_2), B)$. Since both $F_1$ and $F_2$ are reasonable, there exist
  $D_1 \in \Ob(\CC_1)$, $D_2 \in \Ob(\CC_2)$, $g_1 \in \hom_\CC(C_1, D_1)$
  and $g_2 \in \hom_\CC(C_2, D_2)$ such that $F_1(D_1) = B$, $F_2(D_2) = B$,
  $F_1(g_1) = h_1$ and $F_2(g_2) = h_2$. Now, $(D_1, D_2) \in \Ob(\PP)$ because
  $F_1(D_1) = F_2(D_2) = B$ and $(C_1, C_2) \toCCIII (D_1, D_2)$ because
  $(g_1, g_2) \in \hom_{\CC_1 \times \CC_2}((C_1, C_2), (D_1, D_2))$.
  
  Therefore, $\PP$ is a full cofinal subcategory of $\CC_1 \times \CC_2$.
  Take any $(A_1, A_2) \in \Ob(\PP)$. Corollary~\ref{crt.cor.cofinal}
  and Theorem~\ref{rament.thm.srd-mult} now yield
  \begin{align*}
    t_\PP(A_1, A_2) = t_{\CC_1 \times \CC_2}(A_1, A_2) &= t_{\CC_1}(A_1) \cdot t_{\CC_2}(A_2) \text{ and}\\
    \tilde t_\PP(A_1, A_2) = \tilde t_{\CC_1 \times \CC_2}(A_1, A_2) &= \tilde t_{\CC_1}(A_1) \cdot \tilde t_{\CC_2}(A_2).
  \end{align*}

  $(c)$ and $(d)$ follow directly from $(a)$ and $(b)$.
\end{proof}

Let us now relate the result above to a powerful result of M.\ Bodirsky about string amalgamations classes with
the Ramsey property. Let $\Theta$ be a first-order signature. A class $\KK$ of $\Theta$-structures
is a \emph{strong amalgamation class} if for all $\calA, \calB_1, \calB_2 \in \KK$ and
all embeddings $f_1 : \calA \hookrightarrow \calB_1$ and $f_2 : \calA \hookrightarrow \calB_2$
there is a $\calC \in \KK$ and embeddings $g_1 : \calB_1 \hookrightarrow \calC$ and $g_2 : \calB_2 \hookrightarrow \calC$
such that $g_1 \circ f_1 = g_2 \circ f_2$ and $g_1(B_1) \cap g_2(B_2) = g_1(f_1(A)) = g_2(f_2(A))$, where
$A$, $B_1$, $B_2$ and $C$ are the underlying sets of $\calA$, $\calB_1$, $\calB_2$ and $\calC$, respectively:
\begin{center}
  \begin{tikzcd}
    \calB_2 \arrow[r, "g_2"] & \calC \\
    \calA \arrow[u, "f_2"] \arrow[r, "f_1"'] & \calB_1 \arrow[u, "g_1"']
  \end{tikzcd}
\end{center}
This means that in the amalgam $\calC$ the images of $\calB_1$ and $\calB_2$ under $g_1$ and $g_2$ respectively
overlap only where necessary, that is, only in the image of $\calA$ under $g_1 \circ f_1 = g_2 \circ f_2$.

Let $\Theta_1$ and $\Theta_2$ be disjoint first-order signatures, let $\KK_1$ be a class of finite $\Theta_1$-structures
and $\KK_2$ a class of finite $\Theta_2$-structures. Then $\KK_1 \otimes \KK_2$ is a class of $\Theta_1 \union \Theta_2$
structures defined as follows: a $(\Theta_1 \union \Theta_2)$-structure
$\calA = (A, \Theta_1^\calA, \Theta_2^\calA)$ belongs to $\KK_1 \otimes \KK_2$ if and only if
$(A, \Theta_1^\calA) \in \KK_1$ and $(A, \Theta_2^\calA) \in \KK_2$.
Bodirsky's statement now reads as follows:

\begin{THM}[Free superposition of Ramsey classes]\cite[Theorem 1.3]{Bodirsky}\label{rpppg.thm.bodirsky-1}
  Let $\KK_1$ and $\KK_2$ be strong amalgamation classes of finite structures
  in disjoint finite relational signatures $\Theta_1$ and $\Theta_2$, respectively, and with
  the Ramsey property. Then $\KK_1 \otimes \KK_2$ has the Ramsey property.
\end{THM}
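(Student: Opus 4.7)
The plan is to realize $\KK_1 \otimes \KK_2$ as a pullback of categories and then invoke Theorem~\ref{rament.thm.pullback}. Let $\DD$ be the category whose objects are finite sets and whose morphisms are injections; this is directed (in fact it has amalgamation, since the pushout of two injections of finite sets is a finite set). View $\KK_1$ and $\KK_2$ as categories of structures with embeddings, and let $F_i : \KK_i \to \DD$ be the forgetful functor sending a structure $\calA$ to its underlying set $A$ and an embedding to the underlying injection. Let $\PP$ be the pullback of $\KK_1 \toCC{F_1} \DD \overset{F_2}\longleftarrow \KK_2$.

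First, I would identify $\PP$ with $\KK_1 \otimes \KK_2$. An object of $\PP$ is a pair $(\calA_1, \calA_2) \in \Ob(\KK_1) \times \Ob(\KK_2)$ with $A_1 = A_2 = A$, which is the same as a $(\Theta_1 \union \Theta_2)$-structure $(A, \Theta_1^\calA, \Theta_2^\calA)$ whose $\Theta_i$-reducts lie in $\KK_i$; a morphism of $\PP$ is a pair of embeddings $(f_1, f_2)$ with a common underlying injection, which is exactly a single injection that is simultaneously a $\Theta_1$-embedding and a $\Theta_2$-embedding, i.e.\ a $(\Theta_1 \union \Theta_2)$-embedding. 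Under this identification the morphisms of $\PP$ are mono and all homsets are finite.

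The main technical step, and the step I expect to be the main obstacle, is to verify that $F_1$ and $F_2$ are reasonable. Given $\calC \in \KK_i$, a finite set $B$ and an injection $h : C \to B$, one must construct a $\calD \in \KK_i$ with underlying set $B$ together with an embedding $g : \calC \hookrightarrow \calD$ whose underlying map is $h$. This is where strong amalgamation enters: since $\KK_i$ is hereditary and nonempty, the empty structure lies in $\KK_i$, and so does some one-point structure; iteratively strong-amalgamating $\calC$ with one-point structures over the empty substructure yields, by the defining property of strong amalgamation, a member of $\KK_i$ in which $\calC$ embeds and whose carrier has exactly one more element than before, with no new tuples forced on the added point. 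After $|B|-|C|$ such steps, transporting along a bijection that extends $h$ produces the required $\calD$. (Care is needed here if $\Theta_i$ contains unary predicates, since then ``one-point structure'' must be chosen compatibly; the argument goes through provided $\KK_i$ contains enough one-point types, which follows from hereditariness.)

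Once reasonability is established, Theorem~\ref{rament.thm.pullback}$(d)$ applied to $\PP \cong \KK_1 \otimes \KK_2$ yields the structural Ramsey property for $\KK_1 \otimes \KK_2$, and part~$(b)$ of the same theorem sharpens this to the multiplicative formula $\tilde t_{\KK_1 \otimes \KK_2}(\calA_1, \calA_2) = \tilde t_{\KK_1}(\calA_1) \cdot \tilde t_{\KK_2}(\calA_2)$ for every $(\calA_1, \calA_2) \in \Ob(\PP)$, a conclusion stronger than the original statement and one which does not require the signatures to be relational beyond what is needed to make the reasonability argument work.
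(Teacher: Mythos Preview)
Your identification of $\PP$ with $\KK_1 \otimes \KK_2$ is incorrect, and this is precisely the point the paper singles out immediately after stating the theorem. In this paper the pullback appearing in Theorem~\ref{rament.thm.pullback} is constructed as the \emph{full} subcategory of $\CC_1 \times \CC_2$ spanned by the pairs $(C_1, C_2)$ with $F_1(C_1) = F_2(C_2)$ (see the first line of that proof). Hence a morphism of $\PP$ is an arbitrary pair $(f_1, f_2)$ of embeddings with \emph{no} constraint that the underlying injections agree; as the paper puts it, ``there are much more morphisms in $\PP$ than there are in $\KK_1 \otimes \KK_2$.'' Your description ``a pair of embeddings $(f_1, f_2)$ with a common underlying injection'' is the standard categorical pullback, but that is not the object Theorem~\ref{rament.thm.pullback} is about, and the proof of that theorem (which goes through Corollary~\ref{crt.cor.cofinal} and therefore needs $\PP$ to be a full cofinal subcategory of $\CC_1 \times \CC_2$) does not apply to it.

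The paper in fact does not give its own proof of Theorem~\ref{rpppg.thm.bodirsky-1}; it cites Bodirsky and then proves the ``sideways generalization'' Corollary~\ref{crt.cor.pullback}, which replaces strong amalgamation by the hypotheses that $\KK_1$, $\KK_2$ are reasonable and that $\KK_1 \otimes \KK_2$ has amalgamation of finite binary diagrams. There the gap between $\PP$ and $\KK_1 \otimes \KK_2$ is bridged not by an isomorphism but by the faithful, non-full functor $F : \KK_1 \otimes \KK_2 \to \PP$, $f \mapsto (f,f)$, together with Corollary~\ref{crt.cor.comp-cocone-amalg-closed}. This yields only the inequality $t_{\KK_1 \otimes \KK_2}(\calA) \le t_{\KK_1}(\reduct{\calA}{\Theta_1}) \cdot t_{\KK_2}(\reduct{\calA}{\Theta_2})$, not the multiplicative equality you claim at the end. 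Your reasonability argument via strong amalgamation is on the right track for establishing the reasonability hypothesis, but it does not by itself close the gap between $\PP$ and $\KK_1 \otimes \KK_2$.
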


The original proof given in~\cite{Bodirsky} uses results of model theory and topological dynamics,
and in in some instances heavily rely on \cite{KPT} which connects topological
dynamics and Ramsey theory. This statements was later generalized by M.\ Soki\'c
who provided purely combinatorial proof in~\cite[Corollary~2]{sokic-directed-graphs-boron-trees}.

It is important to note that in case $\KK_1$ and $\KK_2$ are classes of first-order structures,
$\KK_1 \otimes \KK_2$ \emph{is not} the pullback $\PP$ of $\KK_1 \overset{U_1}\longrightarrow \Set \overset{U_2}\longleftarrow \KK_2$
since there are much more morphisms in $\PP$ than there
are in $\KK_1 \otimes \KK_2$. Using the strategies we have developed in this paper
we can show the following ``sideways generalization'' of Theorem~\ref{rpppg.thm.bodirsky-1}
where we can generalize the above statement to arbitrary first-order languages at the cost of
replacing the strong amalgamation requirement by a more demanding one.

Let $\Theta$ be a first-order signature and let $\KK$ be a class of finite $\Theta$-structures.
We say that a class $\KK$ is \emph{reasonable}~\cite{KPT} if for every $\calA = (A, \Theta^\calA)$ in $\KK$
and every injective map $f : A \to B$ where $B$ is a finite set there is a structure
$\calB = (B, \Theta^\calB)$ in $\KK$ such that $f$ is an embedding $\calA \hookrightarrow \calB$.

\begin{COR}\label{crt.cor.pullback}
  Let $\KK_1$ and $\KK_2$ be reasonable classes of finite structures
  in first-order signatures $\Theta_1$ and $\Theta_2$, respectively,
  and assume that $\KK_1 \otimes \KK_2$ has amalgamation of finite binary diagrams.

  $(a)$ For every $\calA = (A, \Theta_1^\calA, \Theta_2^\calA)$ in $\KK_1 \otimes \KK_2$ we have that
  (with Convention~($\dagger$) in mind):
  \begin{align*}
    t_{\KK_1 \otimes \KK_2}(A, \Theta_1^\calA, \Theta_2^\calA) &\le t_{\KK_1}(A, \Theta_1^\calA) \cdot t_{\KK_2}(A, \Theta_2^\calA) \text{ and}\\
    \tilde t_{\KK_1 \otimes \KK_2}(A, \Theta_1^\calA, \Theta_2^\calA) &\le \tilde t_{\KK_1}(A, \Theta_1^\calA) \cdot \tilde t_{\KK_2}(A, \Theta_2^\calA).
  \end{align*}

  $(b)$ If both $\KK_1$ and $\KK_2$ have finite structural (embedding) Ramsey degrees then so does $\KK_1 \otimes \KK_2$.

  $(c)$ If both $\KK_1$ and $\KK_2$ have the structural (embedding) Ramsey property then so does $\KK_1 \otimes \KK_2$.
\end{COR}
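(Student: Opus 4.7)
The plan is to realize $\KK_1 \otimes \KK_2$ as a faithful (but not full) subcategory of a pullback to which Theorem~\ref{rament.thm.pullback} applies, and then transfer Ramsey degrees downward via Theorem~\ref{crt.thm.comp-cocone-general}. First I will take $\DD$ to be the category of finite sets with injections, which is directed, has finite homsets, and all of whose morphisms are mono. The reasonableness hypothesis on the class $\KK_i$ translates precisely into reasonableness of the forgetful functor $U_i : \KK_i \to \DD$: any injection $h : U_i(\calC) \to B$ lifts to an embedding $g : \calC \hookrightarrow \calD$ with $U_i(\calD) = B$ by the very definition of a reasonable class. Applying Theorem~\ref{rament.thm.pullback} to the pullback $\PP$ of $\KK_1 \overset{U_1}\longrightarrow \DD \overset{U_2}\longleftarrow \KK_2$ will then give
\begin{align*}
  t_\PP(\calA_1, \calA_2) &= t_{\KK_1}(\calA_1) \cdot t_{\KK_2}(\calA_2), \\
  \tilde t_\PP(\calA_1, \calA_2) &= \tilde t_{\KK_1}(\calA_1) \cdot \tilde t_{\KK_2}(\calA_2),
\end{align*}
for every $(\calA_1, \calA_2) \in \Ob(\PP)$; recall that objects of this $\PP$ are pairs of structures sharing an underlying set, while its morphisms are (possibly incompatible) pairs of embeddings.

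Next I will introduce the obvious functor $G : \KK_1 \otimes \KK_2 \to \PP$ sending $\calA = (A, \Theta_1^\calA, \Theta_2^\calA)$ to the pair of reducts $(\reduct\calA{\Theta_1}, \reduct\calA{\Theta_2})$ and each embedding $f$ of $(\Theta_1 \union \Theta_2)$-structures to the diagonal pair $(f, f)$. This $G$ is faithful but not full, since in $\PP$ the two components of a morphism may have different underlying set-maps. The hypothesis that $\KK_1 \otimes \KK_2$ has amalgamation of finite binary diagrams vacuously fulfills the cocone premise of Theorem~\ref{crt.thm.comp-cocone-general}, and applying part~(a) of that theorem to $G$ will yield
$$
  t_{\KK_1 \otimes \KK_2}(\calA) \le t_\PP(G(\calA)) = t_{\KK_1}(\reduct\calA{\Theta_1}) \cdot t_{\KK_2}(\reduct\calA{\Theta_2}),
$$
which is the first inequality in~(a). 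The structural inequality will follow from the evident subobject-level analogue of Theorem~\ref{crt.thm.comp-cocone-general}, obtained by re-running the binary-diagram construction with subobject colorings $\chi : \subobj{}{G(\calA)}{-} \to k$ in place of morphism colorings; this produces $\tilde t_{\KK_1 \otimes \KK_2}(\calA) \le \tilde t_\PP(G(\calA))$. Parts~(b) and~(c) then fall out of~(a) under Convention~($\dagger$).

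The main delicate point will be the structural inequality, because one cannot deduce it from the embedding inequality simply by dividing through $|\Aut|$: here $\Aut_{\KK_1 \otimes \KK_2}(\calA)$ coincides with the intersection $\Aut_{\KK_1}(\reduct\calA{\Theta_1}) \cap \Aut_{\KK_2}(\reduct\calA{\Theta_2})$ taken inside the symmetric group on $A$, and this is in general strictly smaller than the product $|\Aut_{\KK_1}|\cdot|\Aut_{\KK_2}| = |\Aut_\PP(G(\calA))|$ that appears on the right via the paper's fat pullback. The subobject version of the binary-diagram transfer is therefore genuinely required, rather than being a consequence of Proposition~\ref{rdbas.prop.sml} applied to the embedding bound.
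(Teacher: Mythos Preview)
Your argument for the embedding inequality is correct and coincides with the paper's: the paper likewise forms the pullback $\PP$ of $\KK_1 \overset{U_1}\longrightarrow \Set \overset{U_2}\longleftarrow \KK_2$, observes that reasonableness of the classes $\KK_i$ makes the forgetful functors reasonable in the categorical sense, introduces the diagonal faithful functor $G : \KK_1 \otimes \KK_2 \to \PP$, $f \mapsto (f,f)$, and then combines Theorem~\ref{rament.thm.pullback} with Corollary~\ref{crt.cor.comp-cocone-amalg-closed} (which is just Theorem~\ref{crt.thm.comp-cocone-general} specialized to categories with amalgamation of finite binary diagrams).

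Where you diverge is the structural inequality, and here you have correctly spotted a point the paper brushes aside with ``follows immediately'': dividing the embedding bound by $|\Aut|$ overshoots because $\Aut_{\KK_1\otimes\KK_2}(\calA) = \Aut_{\KK_1}(\reduct\calA{\Theta_1}) \cap \Aut_{\KK_2}(\reduct\calA{\Theta_2})$ can be strictly smaller than $|\Aut_{\KK_1}|\cdot|\Aut_{\KK_2}|$. However, your proposed remedy --- a subobject-level re-run of the binary-diagram argument --- has a genuine gap. In the proof of Theorem~\ref{crt.thm.comp-cocone-general} the coloring transfer relies on the implication ``$e_p\cdot G(u) = e_q\cdot G(v) \Rightarrow f_p\cdot u = f_q\cdot v$'' coming from the cocone. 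The subobject analogue needs ``$e_p\cdot G(u) \sim_{G(A)} e_q\cdot G(v) \Rightarrow f_p\cdot u \sim_A f_q\cdot v$'', i.e.\ every $\beta\in\Aut_\PP(G(\calA))$ relating the two sides must be of the form $G(\alpha)$ for some $\alpha\in\Aut_{\KK_1\otimes\KK_2}(\calA)$. That is exactly the automorphism-preservation hypothesis in Lemma~\ref{rament.lem.cofin-full-faith-weak}(b), and your $G$ fails it: $\Aut_\PP(G(\calA)) = \Aut_{\KK_1}(\reduct\calA{\Theta_1}) \times \Aut_{\KK_2}(\reduct\calA{\Theta_2})$, while $G(\Aut_{\KK_1\otimes\KK_2}(\calA))$ is only the diagonal copy of the intersection. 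So the ``obvious'' subobject version of the binary-diagram transfer does not go through for this $G$, and the structural inequality needs a different idea (or a sharper argument exploiting the particular shape of~$\PP$) that neither you nor the paper supply.
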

\begin{proof}
  Note that $\KK_1$ and $\KK_2$ are categories of finite structures, where we take embeddings as morphisms.
  The obvious forgetful functors $U_i : \KK_i \to \Set : (A, \Theta^\calA) \mapsto A : f \mapsto f$, $i \in \{1, 2\}$,
  are reasonable because $\KK_1$ and $\KK_2$ are reasonable classes. The category $\Set$ is obviously directed.
  Note that $\KK_1 \otimes \KK_2$ is \emph{not} isomorphic to the pullback $\PP$
  of $\KK_1 \overset{U_1}\longrightarrow \Set \overset{U_2}\longleftarrow \KK_2$.
  However, there is a faithful functor $F : \KK_1 \otimes \KK_2 \to \PP$ given by $F(A, \Theta_1^\calA, \Theta_2^\calA)
  = (A, \Theta_1^\calA, A, \Theta_2^\calA)$ on objects and by $F(f) = (f, f)$ on morphisms.
  Therefore, Corollary~\ref{crt.cor.comp-cocone-amalg-closed} and Theorem~\ref{rament.thm.pullback} yield
  $$
  t_{\KK_1 \otimes \KK_2}(A, \Theta_1^\calA, \Theta_2^\calA) \le
  t_\PP(A, \Theta_1^\calA, A, \Theta_2^\calA)
  = t_{\KK_1}(A, \Theta_1^\calA) \cdot t_{\KK_2}(A, \Theta_2^\calA).
  $$
  The analogous statement for $\tilde t$ now follows immediately.
\end{proof}

\section{The Grothendieck construction}
\label{rpppg.sec.grothendieck}

Let $\Theta$ be a relational signature and let $x_1, \ldots, x_n$ be elements of $\calF$.
Let us take a closer look into the finitely generated substructures of $(\calF, x_1, \ldots, x_n)$. If
$(\calA, a_1, \ldots, a_n)$ and $(\calB, b_1, \ldots, b_n)$ are finitely generated
structures that embed into $(\calF, x_1, \ldots, x_n)$
then $\{a_1, \ldots, a_n\}$ generates a substructure of $\calA$ which is isomorphic to the substructure of
$\calF$ generated by $\{x_1, \ldots, x_n\}$. Of course, the same holds for $\calB$.
Let $X = \{x_1, \ldots, x_n\}$ and let $\calX = \calF[X]$ be the substructure of $\calF$ generated by $X$.
Clearly, there are unique embeddings
\begin{align*}
  f_\calA &: (\calX, x_1, \ldots, x_n) \hookrightarrow (\calA, a_1, \ldots, a_n) \text{ and }\\
  f_\calB &: (\calX, x_1, \ldots, x_n) \hookrightarrow (\calB, b_1, \ldots, b_n),
\end{align*}
and for every embedding $h : (\calA, a_1, \ldots, a_n) \hookrightarrow (\calB, b_1, \ldots, b_n)$
we have that
\begin{center}
  \begin{tikzcd}
    & {(\calX, x_1, \ldots, x_n)} \arrow[dl, hook, "f_\calA"'] \arrow[dr, hook, "f_\calB"] & \\
    {(\calA, a_1, \ldots, a_n)} \arrow[rr, hook, "h"']  & & {(\calB, b_1, \ldots, b_n)} 
  \end{tikzcd}
\end{center}
It is now obvious that what we are looking at is, actually, a statement about transporting the Ramsey property
from a category onto a slice category.
As slice categories are special cases of the Grothendieck construction, we shall now prove a general statement about
transporting the Ramsey property from a category $\CC$ onto the Grothendieck category $\GG(\CC, F)$.
Let us now fix the terminology concerning the slice and Grothendieck categories.

Let $\CC$ be a locally small category and $X \in \Ob(\CC)$.
A \emph{slice category} $\slice X\CC$ is a category whose objects are pairs $(f_A, A)$ where $f_A  \in \hom_\CC(X, A)$.
A morphism $(f_A, A) \to (f_B, B)$ in $\slice X\CC$ is every morphism $h : A \to B$ such that $f_B = h \cdot f_A$.

For a locally small category $\CC$ and a functor $H : \CC \to \Set$ let $\GG(\CC, H)$ denote the category whose
objects are pairs $(C, x)$ where $C \in \Ob(\CC)$ and $x \in H(C)$,
morphisms are of the form $f : (C, x) \to (D, y)$ where $f \in \hom_\CC(C, D)$ and $H(f)(x) = y$, and
the composition of morphisms is as in $\CC$ (note that this makes sense because $H$ is a functor).
Clearly, the slice category construction is a special case of the Grothendieck
construction for the hom-functor $H^X : \CC \to \Set$ where $H^X(A) = \hom_\CC(X, A)$.

\begin{THM}\label{crt.thm.grothendieck}
  Let $\CC$ be a locally finite category whose morphisms are mono and homsets are finite.
  Let $H : \CC \to \Set$ be a functor and assume that $\GG(\CC, H)$ is directed. Then:

  $(a)$ $t_{\GG(\CC, H)}(C, x) \le t_\CC(C)$ for all $(C, x) \in \Ob(\GG(\CC, H))$;

  $(b)$ if $\CC$ has small embedding (structural) Ramsey degrees then so does $\GG(\CC, H)$;

  $(c)$ if $\CC$ has the embedding Ramsey property then so does $\GG(\CC, H)$.
\end{THM}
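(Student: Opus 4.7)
The plan is to invoke Theorem~\ref{crt.thm.comp-cocone-general} applied to the forgetful functor $U : \GG(\CC, H) \to \CC$, $(C, x) \mapsto C$, $f \mapsto f$. This $U$ is faithful essentially by construction, while $\GG(\CC, H)$ inherits mono morphisms and finite homsets from $\CC$. Once the binary-diagram lifting hypothesis of Theorem~\ref{crt.thm.comp-cocone-general} is verified, part~$(a)$ is immediate; part~$(c)$ follows directly from $(a)$ because $t_{\GG(\CC, H)}(C,x) \le t_\CC(C) = 1$ forces $t_{\GG(\CC, H)}(C,x) = 1$; and the structural half of~$(b)$ follows by combining~$(a)$ with Proposition~\ref{rdbas.prop.sml} (applicable because finite homsets make all automorphism groups finite).

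The heart of the argument is therefore the lifting claim: given a finite binary diagram $F : \Delta \to \GG(\CC, H)$ whose composite $UF$ admits a compatible cocone $(C, f_1, \ldots, f_n)$ in $\CC$, produce a compatible cocone of $F$ in $\GG(\CC, H)$. Say $F$ sends all bottom vertices to $(A, x)$ and all top vertices to $(B, y)$, and the two morphisms out of the $i$-th bottom vertex are $u_i, v_i : (A, x) \to (B, y)$, targeting the top vertices labeled $\alpha(i)$ and $\beta(i)$. Set $z_j := H(f_j)(y) \in H(C)$. The pivotal observation is that $z_j$ is constant on each connected component of the top row of $\Delta$: applying $H$ to the cocone equality $f_{\alpha(i)} \cdot u_i = f_{\beta(i)} \cdot v_i$, evaluating at $x$, and using $H(u_i)(x) = y = H(v_i)(x)$ (which holds because $u_i, v_i$ are morphisms of $\GG(\CC, H)$), yields
\[
  z_{\alpha(i)} \;=\; H(f_{\alpha(i)})\bigl(H(u_i)(x)\bigr) \;=\; H(f_{\beta(i)})\bigl(H(v_i)(x)\bigr) \;=\; z_{\beta(i)}.
\]

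Let $K_1, \ldots, K_r$ be the connected components, with associated common values $z'_1, \ldots, z'_r \in H(C)$. Since $r$ is finite, iterating directedness of $\GG(\CC, H)$ yields an object $(C^*, z^*) \in \Ob(\GG(\CC, H))$ together with, for each $s$, a $\CC$-morphism $h_s : C \to C^*$ satisfying $H(h_s)(z'_s) = z^*$; crucially, the $h_s$ need not coincide as morphisms of $\CC$. Define $f_j^* := h_s \cdot f_j$ whenever $t_j \in K_s$. Then $H(f_j^*)(y) = H(h_s)(z'_s) = z^*$, so $f_j^*$ is a genuine morphism $(B, y) \to (C^*, z^*)$ in $\GG(\CC, H)$. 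Because $\alpha(i)$ and $\beta(i)$ always lie in the same component $K_s$, the required compatibility $f_{\alpha(i)}^* \cdot u_i = f_{\beta(i)}^* \cdot v_i$ becomes $h_s \cdot (f_{\alpha(i)} \cdot u_i) = h_s \cdot (f_{\beta(i)} \cdot v_i)$, which is the $\CC$-cocone compatibility post-composed with $h_s$.

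The main conceptual point — and the only subtle step — is that the cocone morphisms $f_j^*$ in $\GG(\CC, H)$ need not all arise from the $f_j$ by post-composition with a single $\CC$-morphism: morphisms $h_s$ associated with distinct components are permitted to differ in $\CC$. This flexibility is precisely what makes bare directedness of $\GG(\CC, H)$ suffice, in place of a stronger coequalizer-type property, once the constancy of $z_j$ along each connected component has been observed.
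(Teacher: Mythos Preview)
Your proposal is correct and follows essentially the same route as the paper: apply Theorem~\ref{crt.thm.comp-cocone-general} to the forgetful functor $\GG(\CC,H)\to\CC$, observe that the values $H(f_j)(y)$ are constant along each connected component of the top row of $\Delta$, and then use directedness of $\GG(\CC,H)$ to merge the finitely many component-wise cocones $(C,z'_s)$ into a single cocone over~$F$. The paper's argument and yours differ only in notation.
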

\begin{proof}
  $(a)$
  Let $\DD = \GG(\CC, H)$ and let $G : \DD \to \CC$ be the forgetful functor $(C, x) \mapsto C$ and $f \mapsto f$.
  Note that $G$ is faithful. We shall use Theorem~\ref{crt.thm.comp-cocone-general} to transport
  the Ramsey property from $\CC$ to $\DD$. In order to do so we have to show that for any finite binary
  diagram $F : \Delta \to \DD$ in $\DD$ the following holds: if $GF : \Delta \to \CC$ has a compatible cocone in
  $\CC$ then $F : \Delta \to \DD$ has a compatible cocone in $\DD$.
  
  So, let $F : \Delta \to \DD$ be a finite binary diagram where $F$ takes the bottom row of $\Delta$ to $(A, a)$
  and the top row of $\Delta$ to $(B, b)$ for some $A, B \in \Ob(\CC)$, $a \in H(A)$ and $b \in H(B)$, Fig.~\ref{crt.fig.grothendieck-1},
  \begin{figure}
    \centering
    \begin{tikzcd}[column sep=small]
      & & & & & & & C
    \\
      \bullet & \ldots & \bullet & (B,b) & \ldots & (B,b) & B \arrow[ur, "{e_i}"] & \ldots & B\quad \arrow[ul, "{e_j}"']
    \\
      \bullet \arrow[urr] \arrow[u] & \ldots & \bullet \arrow[ull] \arrow[u] & (A,a) \arrow[urr, near start, "{v}"'] \arrow[u, "{u}"] & \ldots & (A,a) \arrow[ull] \arrow[u]  & A \arrow[urr, near start, "{v}"'] \arrow[u, "{u}"] & \ldots & A \arrow[ull] \arrow[u]
    \\
      & \Delta \arrow[rrr, "{F}"] & & & \DD \arrow[rrr, "{G}"] & & & \CC
    \end{tikzcd}
    \caption{A finite binary diagram in the proof of Theorem~\ref{crt.thm.grothendieck}}
    \label{crt.fig.grothendieck-1}
  \end{figure}
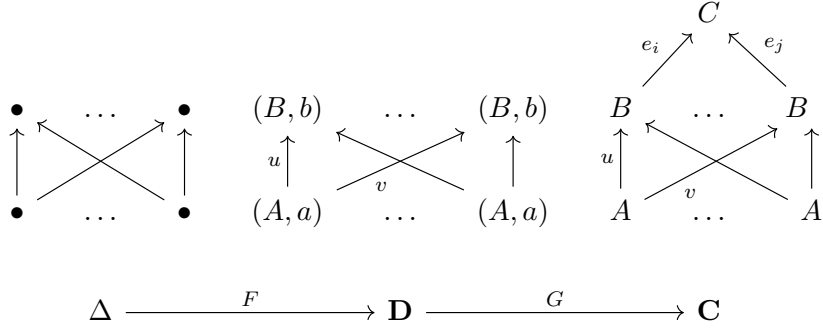
  and assume that $GF : \Delta \to \CC$ has a compatible cocone in $\CC$ with the tip
  at $C \in \Ob(\CC)$ and morphisms $e_i : B \to C$.  
  
  Let $\Delta = P \union Q$ where $P$ is the top row of the diagram and $Q$ the bottom row. Let $S \subseteq P$ be a connected component of $\Delta$
  and let us show that $H(e_i)(b) = H(e_j)(b)$ for all $i, j \in S$. Take any $i, j \in S$. Since~$S$ is a connected component of~$\Delta$
  there exist $i = t_0$, $t_1$, \ldots, $t_k = j$ in $S$,
  $s_1$, \ldots, $s_k$ in $Q$ and arrows $p_j : s_{j} \to t_{j-1}$ and $q_j : s_{j} \to t_{j}$, $1 \le j \le k$:
  \begin{center}
    \begin{tikzcd}
      \llap{$i = \mathstrut$}t_0 & t_1 & \ldots & t_{k-1} & t_k\rlap{$\mathstrut = j$} \\
      s_1 \arrow[u, "p_1"] \arrow[ur, "q_1"'] & s_2 \arrow[u] \arrow[ur] & \ldots \arrow[u] \arrow[ur] & s_k \arrow[u, "p_k"] \arrow[ur, "q_k"']
    \end{tikzcd}
  \end{center}
  Then
  \begin{align*}
    H(e_i)(b)
      &= H(e_{t_0})(b) = H(e_{t_0})(H(u_{p_1})(a)) = H(e_{t_0} \cdot u_{p_1})(a) = \mathstrut\\
      &= H(e_{t_1} \cdot v_{q_1})(a) = H(e_{t_1})(H(v_{q_1})(a)) = H(e_{t_1})(b),
  \end{align*}
  because $i = t_0$, $b = H(u_{p_1})(a)$ and $(e_i)_{i \in P}$ is a compatible cocone over $F$. By the same argument we now have that
  $$
    H(e_i)(b) = H(e_{t_0})(b) = H(e_{t_1})(b) = \ldots = H(e_{t_k})(b) = H(e_j)(b),
  $$
  so, letting $c = H(e_i)(b)$, we see that $(C, c)$ is the tip of a compatible cocone over this connected component of $\Delta$
  with the morphisms $e_i$.
  
  Now, assume that $\Delta$, being a finite diagram, has $n$ connected components $S_1, \ldots, S_n$.
  For each connected component $S_i$ let $c_i \in H(C)$ be constructed as above, $1 \le i \le n$.
  Since $\DD = \GG(\CC, H)$ is directed, there is a $(D, d) \in \Ob(\DD)$ and arrows $h_i : (C, c_i) \to (D, d)$, $1 \le i \le n$.

  \begin{center}
    \begin{tikzcd}[execute at end picture={
            \draw (1.1,-2) rectangle (4.75,0.5);
            \draw (-5.75,-2) rectangle (-2,0.5);
        }]
    & (C, c_i) \arrow[r, "h_i"] & (D, d) & (C, c_j) \arrow[l, "h_j"'] & & 
  \\
    (B, b) \arrow[ur, "e_i" near end] & (B, b) \arrow[u] &  & (B, b) \arrow[u] & (B, b) \arrow[ul, "e_j"' near end]
  \\
    (A, a) \arrow[u] \arrow[ur]  & \rlap{$S_i$} & & (A, a) \arrow[ur] \arrow[u] & \rlap{$S_j$}
    \end{tikzcd}
  \end{center}
  
  Then $(D, d)$ is the tip of a compatible cocone over $F$ in $\DD$, where the morphisms are of the form
  $h_i \cdot e : (B, b) \to (D, d)$.

  \medskip
  
  $(b)$ follows from $(a)$ and Proposition~\ref{rdbas.prop.sml}.
  
  $(c)$ follows from $(b)$.
\end{proof}

The proof of the following theorem is similar to the proof of Theorem~\ref{crt.thm.grothendieck}.
We, therefore, provide only an outline.

\begin{COR}\label{crt.cor.slice-cat}
  Let $\CC$ be a category with amalgamation whose homsets are finite and morphisms are mono.
  Let $X \in \Ob(\CC)$ be arbitrary and $\DD = \slice X\CC$. Then

  $(a)$ $t_{\DD}(x_C, C) \le t_\CC(C)$ for all $(x_C, C) \in \Ob(\DD)$;

  $(b)$ if $\CC$ has small embedding (structural) Ramsey degrees then so does~$\DD$;

  $(c)$ if $\CC$ has the embedding Ramsey property then so does $\DD$.
\end{COR}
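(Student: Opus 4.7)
The plan is to recognize the slice category $\slice X \CC$ as the Grothendieck category $\GG(\CC, H^X)$ for the hom-functor $H^X = \hom_\CC(X, -) : \CC \to \Set$ — the paper has already observed, in the paragraph preceding Theorem~\ref{crt.thm.grothendieck}, that slice categories arise as this particular instance of the Grothendieck construction — and then to invoke that theorem directly. Under the identification, an object $(f_A, A)$ of $\slice X\CC$ corresponds to the pair $(A, f_A)$ with $f_A \in H^X(A) = \hom_\CC(X, A)$, and a slice morphism $h : (f_A, A) \to (f_B, B)$, characterised by $h \cdot f_A = f_B$, coincides with a Grothendieck morphism, since $H^X(h)(f_A) = h \cdot f_A$.

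The hypotheses of Theorem~\ref{crt.thm.grothendieck} that I must verify are that $\CC$ is locally finite with mono morphisms and finite homsets (all of these are granted by the assumptions of the corollary) and that $\GG(\CC, H^X)$ is directed. Only the directedness is not immediate, and here is where the amalgamation hypothesis on $\CC$ enters. Given two slice objects $(f_A, A)$ and $(f_B, B)$ with $f_A : X \to A$ and $f_B : X \to B$, amalgamation applied to $f_A$ and $f_B$ in $\CC$ yields an object $C \in \Ob(\CC)$ together with morphisms $g_A : A \to C$ and $g_B : B \to C$ satisfying $g_A \cdot f_A = g_B \cdot f_B$. Setting $f_C := g_A \cdot f_A = g_B \cdot f_B$ gives a slice object $(f_C, C)$, and $g_A$, $g_B$ become morphisms $(f_A, A) \to (f_C, C)$ and $(f_B, B) \to (f_C, C)$ in $\slice X\CC$ by the very definition of $f_C$.

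With directedness established, parts $(a)$, $(b)$, $(c)$ follow at once from the corresponding parts of Theorem~\ref{crt.thm.grothendieck} applied to $H^X$, noting that $t_\DD(f_C, C) = t_{\GG(\CC, H^X)}(C, f_C)$ under the identification. I do not foresee any genuine obstacle; the content of the corollary is essentially the observation that the amalgamation hypothesis on $\CC$ is calibrated precisely to supply the directedness of $\slice X\CC$ that Theorem~\ref{crt.thm.grothendieck} demands of the Grothendieck category. The only cosmetic subtlety is keeping straight the translation between the slice-category notation $(f_C, C)$ and the Grothendieck notation $(C, f_C)$, which is entirely bookkeeping.
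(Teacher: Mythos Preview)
Your approach is correct. You reduce the corollary to Theorem~\ref{crt.thm.grothendieck} by observing that amalgamation in $\CC$ yields directedness of $\slice X\CC = \GG(\CC, H^X)$, and then invoke that theorem as a black box. The paper takes a slightly different route: rather than citing Theorem~\ref{crt.thm.grothendieck}, it reruns that theorem's proof directly via Theorem~\ref{crt.thm.comp-cocone-general}, and at the step where the connected components of the binary diagram must be glued, it applies amalgamation in $\CC$ (iterated $n$-fold over the morphisms $x_C^1, \ldots, x_C^n : X \to C$) rather than directedness of $\DD$. Your packaging is more modular---the reduction ``amalgamation in $\CC$ $\Rightarrow$ directedness of $\slice X\CC$'' is exactly the missing link that lets Theorem~\ref{crt.thm.grothendieck} apply verbatim---while the paper's version makes the role of amalgamation visible at the precise point where it is needed. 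The underlying mathematics is the same.
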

\begin{proof}
  Let $H : \CC \to \Set$ be the hom-functor $H(A) = \hom_\CC(X, A)$ and $H(f) = f \cdot -$.
  Let $G : \DD \to \CC$ be the forgetful functor $(x_C, C) \mapsto C$ and $f \mapsto f$.
  Note that $G$ is faithful so that we can use Theorem~\ref{crt.thm.comp-cocone-general} to transport
  the Ramsey property from $\CC$ to $\DD$.
  
  Let $F : \Delta \to \DD$ be a finite binary diagram where $F$ takes the bottom row of $\Delta$ to $(x_A, A)$
  and the top row of $\Delta$ to $(x_B, B)$ for some $A, B \in \Ob(\CC)$, $x_A \in \hom_\CC(X, A)$ and
  $x_B \in \hom_\CC(X, B)$ and assume that $GF : \Delta \to \CC$ has a compatible cocone in $\CC$ with the tip
  at $C \in \Ob(\CC)$ and morphisms $e_i : B \to C$.  

  Let $\Delta = P \union Q$ where $P$ is the top row of the diagram and $Q$ the bottom row.
  Let $S \subseteq P$ be a connected component of $\Delta$. Then as in the proof of Theorem~\ref{crt.thm.grothendieck}
  we see that $e_i \cdot x_B = e_j \cdot x_B$ for all $i, j \in S$.
  Therefore, letting $x_C = e_{i_0} \cdot x_B$ for an arbitrary but fixed $i_0 \in \SS$
  we see that $(x_C, C)$ is the tip of a compatible cocone over this connected component
  of $\Delta$ with the morphisms~$e_i$, $i \in S$.
  
  Now, assume that $\Delta$, being a finite diagram, has $n$ connected components $S_1, \ldots, S_n$.
  For each connected component $S_i$ let $x^i_C \in \hom_\CC(X, C)$ be constructed as above, $1 \le i \le n$.
  Since $\CC$ has amalgamation there exist a $D \in \Ob(\CC)$ and morphisms $h_i \in \hom_\CC(C, D)$, $1 \le i \le n$, such that
  \begin{center}
    \begin{tikzcd}
        &   &  D     &   \\
      C \arrow[urr, "h_1"] & C \arrow[ur, "h_2"'] & \cdots & C \arrow[ul, "h_n"']\\
        &   &  X \arrow[ull, "x_C^1"] \arrow[ul, "x_C^2"']  \arrow[ur, "x_C^n"'] & 
    \end{tikzcd}
  \end{center}
  Let $x_D = h_1 \cdot x_C^1 = \ldots = h_n \cdot x_C^n$. Then, clearly, $h_i : (x_C^i, C) \to (x_D, D)$, $1 \le i \le n$.
  Therefore, $(x_D, D)$ is the tip of a compatible cocone over $F$ in~$\DD$:
  \begin{center}
    \begin{tikzcd}[execute at end picture={
            \draw (1.1,-2) rectangle (5.5,0.5);
            \draw (-6.5,-2) rectangle (-2,0.5);
        }]
    & (x_C^i, C) \arrow[r, "h_i"] & (x_D, D) & (x_C^j, C) \arrow[l, "h_j"'] & & 
  \\
    (x_B, B) \arrow[ur, "e_i" near end] & (x_B, B) \arrow[u] &  & (x_B, B) \arrow[u] & (x_B, B) \arrow[ul, "e_j"' near end]
  \\
    (x_A, A) \arrow[u] \arrow[ur]  & \rlap{$S_i$} & & (x_A, A) \arrow[ur] \arrow[u] & \rlap{$S_j$}
    \end{tikzcd}
  \end{center}

  $(b)$ follows from $(a)$ and Proposition~\ref{rdbas.prop.sml}.
  
  $(c)$ follows from $(b)$.
\end{proof}

\begin{COR}
  Let $\Theta$ be a first-order language and let $\KK$ be an amalgamation class of finite $\Theta$-structures.
  Let $c_1, \ldots, c_n \notin \Theta$ be new constant symbols, let
  $\Theta' = \Theta \union \{c_1, \ldots, c_n\}$ and let $\KK'$ be the class of $\Theta'$-structures
  of the form $(\calA, a_1, \ldots, a_n)$ where $\calA \in \KK$ and $a_1, \ldots, a_n \in A$,
  the underlying set of $\calA$. Then

  $(a)$ $t_{\KK'}(\calA, a_1, \ldots, a_n) \le t_\KK(\calA)$ for all $\calA \in \KK$ and $a_1, \ldots, a_n \in A$;

  $(b)$ if $\KK$ has small embedding (structural) Ramsey degrees then so does~$\KK'$;

  $(c)$ if $\KK$ has the embedding Ramsey property then so does $\KK'$.
\end{COR}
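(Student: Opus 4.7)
The plan is to apply Corollary~\ref{crt.cor.slice-cat} after observing that $\KK'$ decomposes into full subcategories, each isomorphic to a slice of $\KK$, the decomposition being indexed by the isomorphism type of the substructure generated by the named tuple.

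Regard $\KK$ as a category whose morphisms are embeddings; the hypotheses of Corollary~\ref{crt.cor.slice-cat} are then met, since homsets between finite structures are finite, embeddings are mono, and $\KK$ has amalgamation by assumption. Fix $\calA^* = (\calA, a_1, \ldots, a_n) \in \KK'$, let $X = \calA[\{a_1, \ldots, a_n\}]$ be the $\Theta$-substructure of $\calA$ generated by the named elements (an object of $\KK$ by the hereditary nature of an amalgamation class), write $\iota_\calA : X \hookrightarrow \calA$ for the inclusion, and set $x_i := a_i$, now viewed as an element of $X$. Every embedding $h : (\calB, b_1, \ldots, b_n) \to (\calC, c_1, \ldots, c_n)$ in $\KK'$ sends $b_i$ to $c_i$ and therefore restricts to an isomorphism $\calB[\{b_1, \ldots, b_n\}] \to \calC[\{c_1, \ldots, c_n\}]$. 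Let $\KK'_X$ be the full subcategory of $\KK'$ whose objects $(\calB, b_1, \ldots, b_n)$ satisfy that $x_i \mapsto b_i$ extends to an isomorphism $X \to \calB[\{b_1, \ldots, b_n\}]$; by the preceding remark, $\KK'_X$ is closed under receiving and sending morphisms from and to $\calA^*$. The assignment $\Phi(f, \calB) = (\calB, f(x_1), \ldots, f(x_n))$, acting identically on morphisms, is an isomorphism of categories $\Phi : \slice{X}{\KK} \to \KK'_X$: surjectivity on objects is the definition of $\KK'_X$, and injectivity holds because $\{x_1, \ldots, x_n\}$ generates $X$, so each $f$ is determined by its values on the $x_i$.

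All Ramsey computations for $\calA^*$ in $\KK'$ take place inside $\KK'_X$. Indeed, if $B \in \KK'$ satisfies $\hom_{\KK'}(\calA^*, B) = \0$, then $C = B$ trivially witnesses $C \to (B)^{\calA^*}_{k, 1}$; otherwise $\calA^* \to B$ forces $B \in \KK'_X$, and Corollary~\ref{crt.cor.slice-cat} applied inside $\slice{X}{\KK}$ supplies a witness for $(\iota_\calA, \calA)$ and $\Phi^{-1}(B)$ of embedding Ramsey degree at most $t_\KK(\calA)$, which is transported via $\Phi$ to the required witness in $\KK'_X \subseteq \KK'$. Consequently,
$$
  t_{\KK'}(\calA^*) \;=\; t_{\slice{X}{\KK}}(\iota_\calA, \calA) \;\le\; t_\KK(\calA),
$$
proving~(a). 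Parts~(b) and~(c) follow immediately, since having finite Ramsey degrees and having the Ramsey property are pointwise conditions; for the structural version in~(b), note that under the standing assumption of finite homsets all automorphism groups in $\KK$ and $\KK'$ are finite, so by Proposition~\ref{rdbas.prop.sml} finiteness of $t$ and $\tilde t$ are equivalent on both sides.

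The main bookkeeping concern is to check that witnesses obtained inside a slice remain witnesses in the ambient $\KK'$ and that colorings do not leak across components; both reduce to the observation that embeddings in $\KK'$ preserve the isomorphism type of the substructure generated by the named tuple, so that $\Phi$ puts $\hom_{\KK'}(\calA^*, C)$ and $\hom_{\slice{X}{\KK}}((\iota_\calA, \calA), \Phi^{-1}(C))$ in bijection for every $C \in \KK'_X$. Beyond this, the argument is a direct unpacking of the slice category machinery developed in Section~\ref{rpppg.sec.grothendieck}.
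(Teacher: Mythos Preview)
Your proof is correct and follows essentially the same route as the paper: both observe that $\KK'$ decomposes into full subcategories indexed by the isomorphism type of the substructure generated by the named tuple, identify each piece with a slice category $\slice{X}{\KK}$, and then invoke Corollary~\ref{crt.cor.slice-cat}. Your version is slightly more explicit (naming the isomorphism $\Phi$ and handling the $\hom_{\KK'}(\calA^*,B)=\0$ case separately), but the underlying argument is the same.
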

\begin{proof}
  As usual, we consider both $\KK$ and $\KK'$ as categories with embeddings as morphisms.
  Note that $\KK'$ partitions into a disjoint union of slice categories according to the
  isomorphism type of the structure $\calA[a_1, \ldots, a_n]$ where $(\calA, a_1, \ldots, a_n) \in \KK'$.

  $(a)$ Take any $(\calA, a_1, \ldots, a_n)$ and $(\calB, b_1, \ldots, b_n)$ in $\KK'$ such that
  $(\calA, a_1, \ldots, a_n) \hookrightarrow (\calB, b_1, \ldots, b_n)$. Let $k \in \NN$ be arbitrary
  and let $t = t_\KK(\calA)$. Let us show that there is a $(\calC, c_1, \ldots, c_n) \in \KK'$ such that
  \begin{equation}\label{crt.eq.adding-consts-0}
    (\calC, c_1, \ldots, c_n) \longrightarrow (\calB, b_1, \ldots, b_n)^{(\calA, a_1, \ldots, a_n)}_{k, t} \text{ in } \KK'.
  \end{equation}
  Let $\calX = \calA[a_1, \ldots, a_n]$. Then, clearly, $\calX \cong \calB[b_1, \ldots, b_n]$.
  Let $f_\calA : \calX \to \calA$ and $f_\calB : \calX \to \calB$ be the unique embeddings satisfying
  $f_\calA(a_i) = a_i$ and $f_\calB(a_i) = b_i$, $1 \le i \le n$.
  Then $(f_\calA, \calA)$ and $(f_\calB, \calB)$ are objects of $\slice\calX\KK$.
  Note that
  \begin{equation}\label{crt.eq.adding-consts}
    \hom_{\KK'}\big(
      (\calA, a_1, \ldots, a_n), (\calB, b_1, \ldots, b_n)
    \big)
    =
    \hom_{\slice\calX\KK}\big(
      (f_\calA, \calA), (f_\calB, \calB)
    \big),
  \end{equation}
  so $(f_\calA, \calA) \to (f_\calB, \calB)$. By Corollary~\ref{crt.cor.slice-cat}~$(a)$ we have that
  there is an $(f_\calC, \calC)$ in $\slice\calX\KK$
  such that $(f_\calC, \calC) \longrightarrow (f_\calB, \calB)^{(f_\calA, \calA)}_{k,t}$ in $\slice\calX\KK$.
  Let $f_\calC(a_i) = c_i$, $1 \le i \le n$. Then, as an immediate consequence of \eqref{crt.eq.adding-consts}
  we get~\eqref{crt.eq.adding-consts-0}.

  $(b)$ follows from $(a)$ and Proposition~\ref{rdbas.prop.sml}.
  
  $(c)$ follows from $(b)$.
\end{proof}


\section{Declarations}

\paragraph{Funding.}
This work was supported by the Science Fund of the Republic of Serbia, Grant No.~7750027:
\textit{Set-theoretic, model-theoretic and Ramsey-theoretic phenomena in mathematical structures: similarity and diversity -- SMART}.

\paragraph{Data Availability.}
Data sharing not applicable to this article as no data\-sets were generated or analysed during
the current study.

\paragraph{Conflict of Interest.}
The author declares no conflict of interest.

\end{document}